\newcommand{\R}{{\mathbb R}}
\newcommand{\Z}{{\mathbb Z}}
\newcommand{\I}{{\mathcal I}}
\newcommand{\Mu}{\mytilde{0.9}{M}}
\newcommand{\trf}{\mytilde{0.6}{f}}
\newcommand{\mytilde}[2]{{\mathpalette{\m@tilde{#1}{#2}}\relax}}
\newcommand{\m@tilde}[4]{\accentset{\smash{\raisebox{-0.15ex}{\scalebox{#1}[0.6]{$#3\thicksim$}}}}#2}
\newcommand{\Msup}{{M_1}}
\newcommand{\Mnot}{{M_0}}
\newcommand\intd{{\,\mathrm{d}}}
\newcommand\D[1]{{\mathrm D#1}}
\newcommand\@vsumbarthickness{0.035}
\newcommand\@vsumbarlength{1.12}
\DeclareMathOperator*\avsum{\mathpalette\@vsum\relax}
\newcommand\@vsum[2]{{
  \savebox\z@{$#1\sum$}
  \vphantom{\sum}
  \smash{\ooalign{%
    $#1\sum$\cr%
    \hidewidth$\vcenter{\hbox{\@vsumbar%
      {\@vsumbarthickness\wd\z@}%
      {\@vsumbarlength\wd\z@}%
    }}$\hidewidth\cr}}}}
\newcommand\@vsumbar[2]{%
  \begin{tikzpicture}
    \draw[line cap=round, line width=#1] (0,0) -- (0,#2);
  \end{tikzpicture}}
\DeclareMathOperator{\var}{var}
\newtheorem{lemma}{Lemma}[section]
\newtheorem{proposition}[lemma]{Proposition}
\newtheorem{theorem}[lemma]{Theorem}
\theoremstyle{definition}
\newtheorem{example}[lemma]{Example}
\theoremstyle{remark}
\newtheorem{remark}[lemma]{Remark}
\crefname{equation}{}{}
\crefname{subsection}{Section}{Sections}
\crefname{subsubsection}{Section}{Sections}
\crefname{enumi}{}{}
\numberwithin{equation}{section}
\pgfplotsset{compat=newest,
    width=0.8\textwidth,
    height=3.5cm,
    scale only axis=true,
    every axis/.style={
        axis line style={thick}
    },
    every axis plot/.append style={thick},
    tick style={black, thick},
    legend cell align={left},
    legend style={at={(axis cs:0,1)}, anchor=north, draw=white!30!black},
    tick align=outside,
    tick pos=left,
    typeset ticklabels with strut,
    stepfct style/.style={black},
    maxfct style/.style={very thick, black, dash pattern=on 4pt off 2.472135954999579pt]},
    maxfct style 2/.style={very thick, black, dash pattern=on 2pt off 1.2360679775pt},
}
\begin{document}

\title
[The centred maximal function diminishes the variation]
{The one-dimensional centred maximal function diminishes the variation of indicator functions}

\makeatletter
\hypersetup{pdftitle={\@title}}
\makeatother

\author[C. Bilz]{Constantin Bilz}
\address[CB]{School of Mathematics,
University of Birmingham,
Edgbaston,
Birmingham,
B15~2TT,
England}
\email{%
\href{mailto:c.bilz@pgr.bham.ac.uk}%
{c.bilz@pgr.bham.ac.uk}}
\urladdr{%
\href{https://sites.google.com/view/bilz}%
{https://sites.google.com/view/bilz}}

\author[J. Weigt]{Julian Weigt}
\address[JW]{Aalto University,
Department of Mathematics and Systems Analysis,
P.O.\linebreak Box~11100,
FI-00076 Aalto,
Finland}
\email{%
\href{mailto:julian.weigt@aalto.fi}
{julian.weigt@aalto.fi}}
\urladdr{%
\href{https://math.aalto.fi/~weigtj1/}
{https://math.aalto.fi/~weigtj1/}}

\subjclass[2020]{Primary 42B25; Secondary 26A45}
\keywords{Maximal function, bounded variation, indicator function}

\begin{abstract}
We prove sharp local and global variation bounds for the centred Hardy--Littlewood maximal functions of indicator functions in one dimension.
We characterise maximisers, treat both the continuous and discrete settings and extend our results to a larger class of functions.
\end{abstract}

\maketitle

\section{Introduction}\label{s:intro}

We are concerned with sharp variation bounds for the centred Hardy\--Littlewood maximal function $Mf$ on the real line $\R$ defined by
\[
M f(x)
=
\sup_{r > 0}
\fint_{x-r}^{x+r} |f(y)| \intd y
=
\sup_{r > 0}
\frac1{2r} \int_{x-r}^{x+r} |f(y)| \intd y.
\]
The variation of a function $f \colon \R \to \R$ on an interval $I \subseteq \R$ is
\[
\var_I(f)
=
\sup_{\text{$\phi \colon \Z \to I$ monotone}}
\sum_{i \in \Z} |f(\phi(i)) - f(\phi(i+1))|.
\]
We write $\var(f) = \var_\R(f)$ and say that \(f\) is of \emph{bounded variation} if $\var(f) < \infty$.

\Textcite{Kur15} proved that for any such function it holds that
\begin{equation}%
\label{e:var_bd}
\var(M f) \leq C \var(f)
\end{equation}
for some large constant $C$ independent of $f$.
It is an open conjecture that the optimal constant in this inequality is $C=1$, see e.g.~\cite{BCHP12,Kur15}.
The following main result proves this in the case of indicator functions.

\begin{theorem}%
\label{tind}
Let $f \colon \R \to \{0,1\}$ be a function of bounded variation.
Then \cref{e:var_bd} holds with $C=1$.
Equality is attained if and only if $f$ is constant or the set $\{x \in \R \mid f(x) = 1\}$ is a bounded interval of positive length.
\end{theorem}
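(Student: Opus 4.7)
My plan is to analyse the structure of $Mf$ on the complement of $E := \{f = 1\}$. Since $\var(f) < \infty$, I would write $E$ as a finite disjoint union of $n_+$ bounded intervals and $n_\infty \in \{0,1,2\}$ unbounded components with $\var(f) = 2n_+ + n_\infty$. First I would establish basic structural facts about $Mf$: it is lower semicontinuous; $Mf \equiv 1$ on the interior of $E$ and $Mf < 1$ on $\R \setminus \overline{E}$; $Mf$ is continuous on the closure of each connected component of $\R \setminus \overline{E}$; and at every $x_0 \in \partial E$ the value $v(x_0) := Mf(x_0)$ lies in $[1/2, 1]$ because a small ball centred at $x_0$ is half contained in $E$. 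Consequently $Mf$ has a jump of size $1 - v(x_0)$ as one passes from the $E^c$ side to the interior of $E$ at each $x_0$.

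Using this, the variation splits cleanly:
\[
\var(Mf) = \sum_{x_0 \in \partial E}\bigl(1 - v(x_0)\bigr) \;+\; \sum_{C}\var_{\overline{C}}(Mf),
\]
where $C$ ranges over the components of $\R \setminus \overline{E}$. The proof would then reduce to the per-component bound
\[
\var_{\overline{C}}(Mf) \;\leq\; \sum_{x_0 \in \partial C \cap \partial E} v(x_0),
\]
with equality exactly for tails $C$ along which $Mf$ decays to $0$. Summing over all $C$ gives $\sum_C \var_{\overline{C}}(Mf) \leq \sum_{x_0 \in \partial E} v(x_0)$, and combining with the jump sum yields $\var(Mf) \leq |\partial E| = 2n_+ + n_\infty = \var(f)$.

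The per-component bound is straightforward on tails: $Mf$ is monotonic along a tail (moving the centre further from $E$ only shrinks $B(x,r) \cap E$ for every $r$), and its limit at infinity is $0$ when $|E| < \infty$ (so $\var_{\overline{C}} = v(x_0)$ exactly) and $1/2$ otherwise (so $\var_{\overline{C}} = v(x_0) - 1/2 < v(x_0)$). The hard case, and the main obstacle, is a bounded gap $C = (b, a)$, where one must show $\var_{[b,a]}(Mf) < v(b) + v(a)$. The difficulty is that $Mf$ on a bounded gap is not in general unimodal: the example $E = (-1,0) \cup (10,11)$ already produces an $Mf$ with two local minima flanking a local maximum inside the gap $(0,10)$. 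My plan for the bounded-gap bound is to analyse how the optimal radius realising the sup in $Mf(x)$ varies as $x$ moves across the gap, so that each local extremum of $Mf$ inside the gap is matched with an extremising ball whose boundary touches specific endpoints of $E$ on each side; these touching conditions should allow the successive extremal values of $Mf$ to be telescoped into a quantity controlled by $v(b) + v(a)$, using $Mf \leq 1 \leq v(b) + v(a)$ at the peaks and a monotonicity analysis at the valleys.

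For the equality statement, tracing back through the bounds, equality in the main inequality forces every per-component estimate to be saturated. Strictness on every bounded gap forces $n_+ \leq 1$, and saturation on every tail forces $|E| < \infty$ and hence $n_\infty = 0$. The surviving cases are $f$ constant and $E$ a single bounded interval of positive length, matching the theorem's characterisation; the latter case is verified directly by checking that $Mf$ takes value $1$ on the interior of $E$, drops to $1/2$ at the endpoints, and decays symmetrically to $0$ along each tail, giving $\var(Mf) = 2 = \var(f)$.
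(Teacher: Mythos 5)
Your outer architecture is sound and in fact parallels the paper's: decompose $\R$ along the components of $E$ and its complement, handle the unbounded tails by monotonicity (this matches \cref{l:mong} and \cref{l:abcd}\cref{i:abcd_unbounded}, including the observation that the tail limit is $1/2$ rather than $0$ when $E$ is unbounded, which is how the paper too rules out unbounded support in the equality case), and reduce everything to a per-gap bound with strictness. The tail analysis and the equality bookkeeping are essentially correct (modulo a minor issue: degenerate single-point components of $E$ break both the claim $v(x_0)\geq 1/2$ and the count $|\partial E| = 2n_+ + n_\infty$, though such components only add slack since they do not affect $Mf$). The genuine gap is that the bounded-gap bound $\var_{[b,a]}(Mf) < v(b)+v(a)$ is asserted with a plan rather than proved, and this bound is precisely the entire mathematical content of the paper's \cref{l:abcd}\cref{i:abcd_bounded}, to which all of \cref{s:l} is devoted.

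Moreover, the ingredients you name for the plan demonstrably do not suffice. If $Mf$ on $[b,a]$ follows the pattern $v(b)\searrow m_1\nearrow M\searrow m_2\nearrow v(a)$ (which occurs already in \cref{ex:two_bumps}), then
\[
\var_{[b,a]}(Mf) = v(b)+v(a)+2\bigl(M-m_1-m_2\bigr),
\]
so your bound requires the peak--valley inequality $M \leq m_1 + m_2$ (and its analogues for longer oscillation patterns, whose a priori exclusion you have not established). The estimate \enquote{$Mf \leq 1 \leq v(b)+v(a)$ at the peaks} gives no purchase on this: it bounds $M$ but says nothing about the valleys, and matching extremisers to \enquote{touching balls} plus telescoping is not a mechanism --- no cancellation is identified. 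The paper's actual mechanism is quantitative and sharp with no room to spare: on a half-gap it splits $Mf=\max(\Mnot f,\Msup f)$ into small- and large-radius parts, proves the gradient bound of \cref{l:gradbd} giving difference quotients of $\Msup f$ at most $\Msup f(y)/(1+x)$, integrates $\int_0^1 2(1+x)^{-2}\intd x = 1$ via a Riemann-sum argument to get \cref{l:varbd} (with the strictness you also need, to force $n_+\leq 1$ in your equality argument, extracted from a $\delta$-improvement near the midpoint), shows $\Mnot f$ is monotone (\cref{l:mon}), and combines the two through the max-variation \cref{l:varmax}. None of these ideas, or substitutes for them, appears in your sketch, so the proposal as it stands reduces the theorem to its hardest step without resolving it.
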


Note that an indicator function is of bounded variation precisely if it has at most finitely many jumps.
This directly implies that $f(x) = 0$ or $f(x) = Mf(x)$ for Lebesgue\-/almost every $x \in \R$.
Our methods only require this weaker assumption, allowing us to prove the following more general result for nonnegative functions.

\begin{theorem}%
\label{t}
Let $f \colon \R \to [0,\infty)$ be a function of bounded variation such that for almost every $x \in \R$ we have that $f(x) = 0$ or $f(x) = Mf(x)$.
Then \cref{e:var_bd} holds with $C=1$.
Equality is attained if and only if \(f\) is constant or the set $\{x \in \R \mid f(x) > 0\}$ is a bounded interval of positive length and for any $x \in \R$, \[\liminf_{y \to x} f(y) \leq f(x) \leq \limsup_{y \to x} f(y).\]
\end{theorem}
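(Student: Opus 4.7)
The plan is to decompose $\var(Mf)$ according to where $f$ is supported and to establish a sharp per-gap variation bound on the complement. Since the hypothesis forces $f = Mf$ on $\{f > 0\}$, the variation of $Mf$ on the support equals that of $f$; the only work is to control $Mf$ on the gaps. After disposing of the trivial case of constant $f$, I would pass to a pointwise representative $\bar f$ equal to $f$ almost everywhere (hence with $M\bar f = Mf$) and chosen so that $\bar f(x) \in \{0, Mf(x)\}$ at every $x$ and $\var(\bar f) \leq \var(f)$; the limsup/liminf condition in the equality statement is exactly what forces $\var(f) = \var(\bar f)$.

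Let $E = \overline{\{\bar f > 0\}}$ and decompose $E^c = \bigsqcup_i (a_i, b_i)$ into its open connected components, the \emph{gaps}. On each gap, $\bar f = 0$ and $Mf > 0$, so $Mf$ is continuous there, with well-defined one-sided limits $Mf(a_i^+), Mf(b_i^-)$ (interpreted as $0$ at $\pm\infty$). By the lower semicontinuity of $Mf$ and the identity $Mf = \bar f$ on $E$, the jumps of $Mf$ at the gap endpoints have sizes $\bar f(a_i^-) - Mf(a_i^+)$ and $\bar f(b_i^+) - Mf(b_i^-)$. The core step is the per-gap inequality
\[
\var_{(a_i,b_i)}(Mf) \leq Mf(a_i^+) + Mf(b_i^-),
\]
which combined with the endpoint jumps rearranges to $\var_{[a_i,b_i]}(Mf) \leq \bar f(a_i^-) + \bar f(b_i^+)$. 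Summing over $i$ and adding $\var_E(Mf) = \var_E(\bar f)$ telescopes to $\var(\bar f)$, yielding $\var(Mf) \leq \var(\bar f) \leq \var(f)$.

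I expect the per-gap bound to be the main obstacle. For $x$ in a gap, each averaging ball contributing to $Mf(x)$ must extend outside the gap, so $Mf$ on the gap is the supremum of three envelopes: a ``left-only'' envelope $\sup_r \frac{1}{2r}\int_{x-r}^{a_i} \bar f$ (monotone nonincreasing in $x$, with range at most $Mf(a_i^+)$), a ``right-only'' envelope (monotone nondecreasing, range at most $Mf(b_i^-)$), and a ``both-sides'' envelope from averages extending past both $a_i$ and $b_i$. The one-sided envelopes each contribute at most their range to the variation; the work will be to show that the ``both-sides'' envelope, which may introduce additional local extrema in $Mf$ at crossover points between competing averaging strategies, does not increase the total variation beyond $Mf(a_i^+) + Mf(b_i^-)$. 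This should follow from the continuity of $Mf$ at crossover points: any variation gained by one envelope comes at the expense of another's range.

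For the equality case, each per-gap inequality must be tight. For a bounded gap this would force $Mf$ to touch $0$ inside, which is impossible for nontrivial $f$, so $\{\bar f > 0\}$ must be a single interval; tightness in the two unbounded gaps then requires $Mf(\pm\infty) = 0$, i.e., $\bar f \in L^1$ with compact support. Combined with $\var(f) = \var(\bar f)$ from the limsup/liminf condition, this gives the stated equality characterization.
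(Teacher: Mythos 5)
Your overall architecture coincides with the paper's: the canonical representative $\bar f$ with $\var(\bar f)\leq\var(f)$ (with the limsup/liminf condition governing equality there), the decomposition into gaps, and the per-gap variation bound are precisely the paper's \cref{s:repr}, \cref{l:varbarf} and \cref{l:abcd}. But the step you defer --- controlling the ``both-sides'' envelope --- is the entire content of the theorem, and the mechanism you propose for it does not work. The per-gap bound is exactly tight: in the paper's normalisation (gap $[-1,1]$, working on $[0,1]$) it reduces to $\var_{[0,1]}(\Msup f)\leq\Msup f(1)$, and the proof integrates a derivative bound whose integral $\int_0^1 2(1+x)^{-2}\intd x$ equals exactly $1$, so there is no slack for a qualitative argument. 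Your heuristic that ``any variation gained by one envelope comes at the expense of another's range'' fails because the both-sides envelope \emph{by itself} produces strict interior local maxima whose up-and-down contribution to the variation is not offset by any range of the one-sided envelopes: in \cref{ex:two_bumps}, $Mf$ has a peak of height $(c-1)/c$ at $0$ and valleys at $\pm c/3$, all created by balls covering the whole gap. What caps such peaks is quantitative: every ball contributing to the both-sides envelope at $x\in[0,1]$ has radius at least $1+x$ (the distance to the far end of the gap), which yields the bound $\Msup f(x)-\Msup f(y)\leq\frac{|x-y|}{1+x}\Msup f(y)$ of \cref{l:gradbd}, and integrating this over the half-gap gives the sharp constant. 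This is also why the paper splits each gap at its midpoint in \cref{l:abcd}\cref{i:abcd_bounded}, a feature absent from your sketch: on the half nearer to $b$ no ball can cross $a$ without also crossing $b$, so your trichotomy collapses to a dichotomy between the monotone envelope $\Mnot f$ (\cref{l:mon}) and $\Msup f$, and the pointwise maximum with a monotone function is then handled by \cref{l:varmax}. Without a radius-dependent derivative estimate your proposal has no device to bound the interior peaks at all, let alone sharply.

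Your equality analysis contains a genuine error as well: from $\var_{[a,b]}(Mf)=Mf(a)+Mf(b)$ one cannot softly conclude that $Mf$ ``touches $0$'' inside the gap. The elementary lower bound is $\var_{[a,b]}(Mf)\geq Mf(a)+Mf(b)-2\inf_{[a,b]}Mf$, which is compatible with equality in the upper bound for any positive infimum; the paper instead proves strictness of the local bound itself, via the $\delta>0$ refinement in \cref{l:varbd} (near the near endpoint all admissible radii are at least $1+x+\delta$, making the relevant integral strictly less than $1$). Two smaller omissions: summing over gaps requires that their endpoints do not accumulate (\cref{l:onesided0}), and your inference that ``tightness in the two unbounded gaps forces compact support'' presupposes that the support is already bounded --- ruling out an unbounded support component along which $\bar f\to0$ requires the concavity of $\bar f$ on components of its support (\cref{l:concave,l:concave2}). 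These are repairable, but the missing gradient bound described above is the decisive gap.
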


The regularity of maximal functions was first studied by \textcite{Kin97} who proved that the $d$\=/dimensional centred Hardy\--Littlewood maximal operator is bounded on the Sobolev space $W^{1,p}(\R^d)$ when $1<p\leq\infty$ and $d \geq 1$.
\Textcite{HO04} later asked whether the endpoint inequality
\begin{equation}\label{e:HO04}
\|\nabla Mf\|_{L^1(\R^d)} \leq C \|\nabla f\|_{L^1(\R^d)}
\end{equation}
also holds and \citeauthor{Kur15}'s inequality~\cref{e:var_bd} provides a positive answer to this question in the one\-/dimensional case.
The higher\-/dimensional case remains completely open.

In comparison to the one\-/dimensional \emph{centred} Hardy\--Littlewood maximal function, its \emph{uncentred} counterpart
\[
\Mu f(x)
=
\sup_{x_0 < x < x_1} \frac1{x_1-x_0} \int_{x_0}^{x_1} |f(y)| \intd y
\]
allows averages over a larger class of intervals and hence may be expected to be smoother.
Indeed, \textcite{Tan02} gave a short proof of the uncentred version of \eqref{e:var_bd} with $C=2$ and later \textcite{AP07} showed that the optimal constant is $C = 1$.
\Textcite{Ram19} studied the sharp version of \cref{e:var_bd} for a family of \emph{nontangential} maximal functions interpolating between the centred and uncentred Hardy--Littlewood maximal functions.

Similarly, higher\-/dimensional partial results are available for the uncentred maximal function where the corresponding results are not known in the centred case.
The first such result is due to \textcite{MR2539555} who proved the uncentred version of \cref{e:HO04} for so\-/called block decreasing functions.
Later, \textcite{Lui18} proved the same for radial functions and the second author~\cite{Wei20} proved the corresponding inequality for indicator functions.

\subsection{Discrete setting}\label{s:introdiscrete}

Our methods also imply discrete analogues of \cref{tind,t}.
The discrete centred Hardy\--Littlewood maximal function $M f \colon \Z \to \R$ of a bounded function $f \colon \Z \to \R$ is defined by
\[
M f(n) =
\sup_{r \in \Z_{\geq 0}}
\avsum_{m=n-r}^{n+r} |f(m)|
=
\sup_{r \in \Z_{\geq 0}}
\frac1{2r+1}
\sum_{m=n-r}^{n+r} |f(m)|.
\]
For a \emph{discrete interval} $I \subseteq \Z$, i.e.\ the intersection of $\Z$ and a real interval, the variation of \(f\) on \(I\) is
\[
\var_I(f) = \sum_{n,n+1 \in I} |f(n) - f(n+1)|.
\]
We say that $f$ is of \emph{bounded variation} if $\var_\Z(f) < \infty$.

\Textcite{BCHP12} proved that
\[
\var_\Z(Mf) \leq C \sum_{n \in \Z} |f(n)|
\]
for $C = 2 + \frac{146}{315}$.
They asked whether the optimal constant in this inequality is $C=2$ and whether the stronger inequality
\begin{equation}\label{e:var_bdZ}
\var_\Z(M f) \leq C \var_\Z(f)
\end{equation}
analogous to \cref{e:var_bd} holds.
\Textcite{Mad17} affirmatively answered the first question and \textcite{Tem13} adapted \citeauthor{Kur15}'s method to prove \cref{e:var_bdZ} with a non\-/optimal constant.
We improve \citeauthor{Tem13}'s result by establishing the optimal constant $C=1$ in the case of indicator functions.

\begin{theorem}\label{tindZ}
Let $f \colon \Z \to \{0,1\}$ be a function of bounded variation.
Then \cref{e:var_bdZ} holds with $C=1$.
Equality is attained if and only if $f$ is constant or the set $\{n \in \Z \mid f(n) = 1\}$ is a bounded nonempty discrete interval.
\end{theorem}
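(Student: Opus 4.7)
Plan: I would reduce \cref{tindZ} to its continuous counterpart \cref{tind} through a step-function extension. Define $\tilde f \colon \R \to \{0,1\}$ by $\tilde f(x) = f(n)$ whenever $x \in [n - \tfrac12, n + \tfrac12)$; then $\tilde f$ is an indicator function with $\var(\tilde f) = \var_\Z(f)$, since its jumps occur precisely at the half-integers separating neighbouring integers where $f$ changes value. Applying \cref{tind} to $\tilde f$ yields $\var(M\tilde f) \leq \var(\tilde f) = \var_\Z(f)$, with equality iff $\tilde f$ is constant or $\{\tilde f = 1\}$ is a bounded interval of positive length---equivalently, iff $f$ is constant or $E := \{n \in \Z \mid f(n) = 1\}$ is a bounded nonempty discrete interval.

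The crucial technical step is the comparison $M\tilde f(n) = Mf(n)$ at every integer $n$. The inequality $M\tilde f(n) \geq Mf(n)$ is immediate: choosing a half-integer radius $r = m + \tfrac12$ gives $\int_{n-r}^{n+r}\tilde f = \sum_{k = n-m}^{n+m} f(k)$, so the continuous average matches the discrete one at radius $m$. For the reverse inequality one must verify that no non-half-integer radius produces a larger continuous average. Writing $r = m + \tau$ with $m \in \Z_{\geq 0}$ and $\tau \in [0, 1)$, I would decompose the integral $\int_{n-r}^{n+r}\tilde f$ into contributions from partial and full unit pieces of $\tilde f$ and express the resulting average as a linear-over-linear function of $\tau$. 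Its monotonicity is governed by the sign of an expression of the form $(2m \pm 1)A - 2B$, where $A$ denotes the $f$-mass on the two boundary pieces of the ball and $B$ the $f$-mass of the middle pieces; in either sign regime the extremal value of $\tau$ produces an average bounded above by one of the nearby discrete averages $u(n, m-1)$, $u(n, m)$ or $u(n, m+1)$, each of which is at most $Mf(n)$.

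Granting this comparison, restricting the total variation of $M\tilde f$ to the integer partition of $\R$ gives
\[
\var_\Z(Mf) = \sum_{n \in \Z} |M\tilde f(n+1) - M\tilde f(n)| \leq \var(M\tilde f) \leq \var_\Z(f),
\]
which is the desired inequality. For equality, the rigidity part of \cref{tind} leaves only two possibilities: $\tilde f$ is constant (i.e.\ $f$ is constant), or $\{\tilde f = 1\} = [a - \tfrac12, b + \tfrac12)$ for some bounded nonempty $E = \{a, \ldots, b\}$. In both cases $M\tilde f$ is explicitly monotone on each unit interval $[n, n+1]$, so restricting the partition to integers loses no variation and equality is attained throughout.

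The main obstacle is the comparison claim itself: although elementary, it requires a careful case analysis across the two sign regimes of the derivative in $\tau$ and the transition at $\tau = \tfrac12$, where the ball starts to engulf additional pieces of $\tilde f$. A direct alternative would be to decompose $\var_\Z(Mf)$ over the maximal discrete intervals and gaps of $E$ and prove that the variation contributed by each external gap is at most $1$ (by a monotonicity argument for $Mf$) and by each internal gap is at most $2$ (via a coarea estimate on the level sets of $Mf$), but the lifting strategy is appreciably cleaner given that \cref{tind} is already available.
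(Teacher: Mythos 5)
Your proposal is correct and essentially reproduces the paper's own argument: your step-function extension $\tilde f$ is the paper's $f_c$, and your comparison claim $M\tilde f(n)=Mf(n)$ is exactly \cref{l:emb}, whose proof sidesteps the case analysis you anticipate by observing that on each radius range $[m-\tfrac12,m+\tfrac12]$ the average $\fint_{n-r}^{n+r}\tilde f(y)\intd y$ has the form $A+B/r$ and is therefore monotone in $r$, so the supremum is attained at half-integer radii. Your converse equality step (monotonicity of $M\tilde f$ on each unit interval) differs only cosmetically from the paper's direct computation $\var_\Z(Mf)\geq 2Mf(n)-\lim_{m\to\infty}\bigl(Mf(m)+Mf(-m)\bigr)=2=\var_\Z(f)$, and both are valid.
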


In fact this result quickly follows from the continuous \cref{tind} and an embedding argument.
In the same way, we also establish the following relationship between the optimal constants in the continuous and discrete variation bounds for general functions of bounded variation.

\begin{proposition}\label{p:emb}
If \cref{e:var_bd} holds for all functions of bounded variation,
then the same is true for \cref{e:var_bdZ} with the same constant.
\end{proposition}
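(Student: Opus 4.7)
The plan is to transfer the continuous inequality to the discrete one via a step-function embedding. Given $g \colon \Z \to \R$ of bounded variation, I first reduce to the case $g \geq 0$: since $Mg = M|g|$ and $\var_\Z(|g|) \leq \var_\Z(g)$ by the reverse triangle inequality, the bound for $|g|$ implies the bound for $g$. Define the step function $\tilde g \colon \R \to \R$ by setting $\tilde g(x) = g(n)$ whenever $x \in [n - \tfrac{1}{2}, n + \tfrac{1}{2})$. Its jumps occur precisely at the half-integers $n + \tfrac{1}{2}$ with magnitudes $|g(n+1) - g(n)|$, so $\var_\R(\tilde g) = \var_\Z(g)$.

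The core of the argument is the pointwise identity $M\tilde g(n) = Mg(n)$ for every $n \in \Z$. Set $N(r) = \int_{n-r}^{n+r} \tilde g(y) \intd y$ and $A(r) = N(r)/(2r)$. For $r = k + \tfrac{1}{2}$ with $k \in \Z_{\geq 0}$, the interval $[n - r, n + r]$ exactly covers the cells of $\tilde g$ indexed by $n - k, \ldots, n + k$, so $A(k + \tfrac{1}{2})$ equals the discrete average of radius $k$; hence $\sup_{k \geq 0} A(k + \tfrac{1}{2}) = Mg(n)$. It remains to rule out larger continuous averages at other radii. On $(0, \tfrac{1}{2})$, $A \equiv g(n)$. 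On each interval $(k - \tfrac{1}{2}, k + \tfrac{1}{2})$ with $k \geq 1$, the function $N$ is affine in $r$ with slope $c_k = g(n - k) + g(n + k)$, and a direct computation yields
\[
A'(r) = \frac{c_k r - N(r)}{2 r^2} = \frac{c_k (k - \tfrac{1}{2}) - N(k - \tfrac{1}{2})}{2 r^2}.
\]
The numerator on the right is independent of $r$, so $A'$ has constant sign and $A$ is monotonic on the interval; its supremum there is attained at one of the two half-integer endpoints. Combining over all such intervals yields $\sup_{r > 0} A(r) = Mg(n)$.

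With the identity in hand, the conclusion is immediate. Applying the hypothesis to $\tilde g$ gives $\var_\R(M\tilde g) \leq C \var_\R(\tilde g) = C \var_\Z(g)$. Any monotone map $\phi \colon \Z \to \Z$ is in particular a monotone map $\Z \to \R$, so $\var_\Z((M\tilde g)|_\Z) \leq \var_\R(M\tilde g)$. Since $(M\tilde g)|_\Z = Mg$ by the pointwise identity, we conclude $\var_\Z(Mg) \leq C \var_\Z(g)$, as required.

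I expect the only nontrivial point to be the pointwise identity at integer points, i.e.\ ruling out that some non-half-integer radius produces a continuous average exceeding the discrete maximum. The key observation enabling this is that on each open interval between consecutive half-integers the numerator of $A'$ simplifies to an expression independent of $r$, which forces $A$ to be monotonic and collapses the continuous supremum onto the discrete radii.
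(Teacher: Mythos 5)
Your proposal is correct and follows essentially the same route as the paper: the same step\-/function embedding $g \mapsto \tilde g$, the same variation identity, and the same key pointwise identity $M\tilde g(n) = Mg(n)$, which the paper's Lemma~\ref{l:emb} proves by the equivalent observation that the continuous average is of the form $A + B/r$ (hence monotone) between consecutive half\-/integer radii. Your preliminary reduction to $g \geq 0$ is a minor, harmless addition that the paper handles implicitly.
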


However, we do not know whether a similar embedding argument can be used to prove the following discrete analogue of the stronger \cref{t}.
This is mainly because of the additional assumptions in these theorems.
We circumvent this issue by adapting the proof of \cref{t} to the discrete setting.

\begin{theorem}%
\label{t:Z}
Let $f \colon \Z \to [0,\infty)$ be a function of bounded variation such that for any $n \in \Z$ we have $f(n) = 0$ or $f(n) = Mf(n)$.
Then \cref{e:var_bdZ} holds with $C=1$.
Equality is attained if and only if $f$ is constant or the set $\{n \in \Z \mid f(n) > 0\}$ is a bounded nonempty discrete interval.
\end{theorem}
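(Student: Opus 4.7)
The approach mirrors the proof of \cref{t} in the continuous setting, adapted with care for discrete integer radii. Let $E = \{n \in \Z : f(n) > 0\}$; by hypothesis this equals $\{n : f(n) = Mf(n)\}$. Decompose $E$ into its maximal discrete subintervals $\{E_j\}_j$. First I would dispose of the trivial cases: $E = \emptyset$ gives $f \equiv 0$, while $E = \Z$ implies $f(n) = Mf(n) \geq \tfrac{1}{3}(f(n-1) + f(n) + f(n+1))$ for every $n$, so $f$ is discretely concave; combined with $f \geq 0$ and bounded variation this forces $f$ to be constant. In either case both sides of \cref{e:var_bdZ} vanish.

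In the main case I would partition $\Z$ into the components $E_j$ and their complementary intervals (bounded gaps between consecutive components and at most two unbounded rays). On each $E_j$ the hypothesis yields $\var_{E_j}(Mf) = \var_{E_j}(f)$. For each complementary interval $[\alpha, \beta] \cap \Z$ with $\alpha, \beta \in E$ or $\pm\infty$, $f$ vanishes on the open interior, so $\var_{[\alpha,\beta]}(f) = f(\alpha) + f(\beta)$, taking $f(\pm\infty) = 0$. The core of the proof is the gap bound $\var_{[\alpha,\beta]}(Mf) \leq f(\alpha) + f(\beta)$, strict whenever the gap is nonempty and $f \not\equiv 0$, because $Mf > 0$ throughout the gap. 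Summing yields \cref{e:var_bdZ} with $C = 1$.

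To establish the gap bound, I would split the gap into three subregions by the \emph{reach type} of an optimal averaging ball at $n$: a \emph{left-only} region, where an optimal ball intersects $E$ only to the left of $\alpha$, a \emph{right-only} region defined analogously, and a middle \emph{both-reaching} region. A translation-comparison argument, exploiting $f \geq 0$ and $f \equiv 0$ on the open gap, shows that $Mf$ is monotone within each one-sided region: shifting an optimal left-only ball leftward keeps it left-only and weakly increases its captured mass. Within the both-reaching region, $Mf$ is unimodal with peak value $P = (M_L + M_R)/(2R+1)$, where $R$ is the optimal radius and $M_L, M_R$ are the left and right mass contributions. The key inequality is a peak bound of the form $P \leq Mf(n^L) + Mf(n^R)$ (or a variant involving the adjacent interior points, depending on the direction of the transitions at the interfaces), which follows from constructing suitable one-sided balls at the interface points whose averages bound $M_L/(2R+1)$ and $M_R/(2R+1)$ from below. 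Combining the three regions then gives $\var_{[\alpha,\beta]}(Mf) \leq f(\alpha) + f(\beta)$.

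The equality characterization follows by tracing where the inequalities are strict: any nonempty bounded gap strictly reduces variation since $Mf > 0$ there, and any unbounded $E$-component other than $E = \Z$ forces $\lim_{n \to \mp\infty} Mf(n) > 0$ on the opposite ray, again producing a strict loss. Hence equality forces $f$ to be constant or $E$ to be a single bounded, nonempty discrete interval. I expect the main obstacle to be the peak bound and the case analysis it requires: the natural same-radius ball at an interface may already be both-reaching rather than one-sided, and an adjustment to a smaller still-mass-capturing radius is needed, involving several sub-cases driven by the discrete integer constraints and the precise location of the interface points relative to the peak.
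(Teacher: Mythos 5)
Your global skeleton --- decompose $\Z$ into the maximal support intervals $E_j$ and the complementary gaps and rays, note $\var_{E_j}(Mf)=\var_{E_j}(f)$ since $f=Mf$ on $E_j$, prove the gap bound $\var_{[\alpha,\beta]\cap\Z}(Mf)\leq f(\alpha)+f(\beta)$ with strictness, and then trace strictness through unbounded components for the equality characterisation --- is exactly the paper's (\cref{l:abcdZ} together with \cref{s:proofZ,s:eqZ}), and your handling of the trivial cases and of unbounded support components is sound. The genuine gap is in your proof of the gap bound itself: the claimed structure of $Mf$ inside a gap (a monotone left\-/only region, a monotone right\-/only region, and a middle \emph{both\-/reaching} region on which $Mf$ is unimodal with a single peak $P=(M_L+M_R)/(2R+1)$ coming from one optimal radius $R$) is false. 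Both\-/reaching balls can be optimal at several incompatible scales, each contributing its own interior peak. Concretely, in the continuous normalisation (a scaled\-/up discrete version behaves identically): put mass $0.1$ on each of $[-1.1,-1]$ and $[1,1.1]$, mass $0.9$ on $[-12,-11]$ and mass $0.9$ on $[11,12.4]$. In the gap $(-1,1)$, balls covering $[-1.1,1.1]$ contribute $V_1(x)=0.2/(2(1.1+|x|))$, peaked at $0$ with $V_1(0)\approx 0.0909$, while balls covering $[-12,12.4]$ contribute $V_2(x)=2/(2\max(x+12,\,12.4-x))$, peaked at $0.2$ with $V_2(0.2)\approx 0.0820$; one checks $V_2(0)\approx 0.0806<V_1(0)$ and $V_1(0.2)\approx 0.0769<V_2(0.2)$, and that every other covering configuration stays strictly below both near $[0,0.2]$. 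Hence $Mf$ has two strict interior local maxima in a single zero\-/gap, both produced by both\-/reaching balls, with a genuine dip between them. Your peak bound addresses one peak, and nothing in your argument controls the \emph{number} of such peaks, so the anticipated case analysis cannot close without a quantitative input. (A smaller soft spot: your one\-/sided regions need not be intervals; the paper instead proves monotonicity of the full one\-/sided supremum $\Mnot f$ in \cref{l:monZ}, which is classification\-/free.)

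The paper's replacement for your middle region is precisely such a quantitative input. It splits the gap at its (possibly half\-/integer) midpoint and writes $Mf=\max(\Mnot f,\Msup f)$, where $\Mnot f$ uses only balls not crossing the far edge of the gap (your monotone one\-/sided part) and $\Msup f$ uses all balls crossing it --- note this is not the left/right/both trichotomy. Since every ball admissible for $\Msup f$ at $n$ has radius at least $n+a+1/2$, the difference\-/quotient bound of \cref{l:gradbdZ} holds, and telescoping it in \cref{l:varbdZ} bounds the \emph{total variation} of $\Msup f$ on the half\-/gap by $\frac{2a}{2a+1}\Msup f(a)$, no matter how many local extrema $\Msup f$ has; \cref{l:varmaxZ} then combines the monotone and oscillating parts, and the factor $\frac{2a}{2a+1}<1$ delivers the strictness that you attributed, without proof, to $Mf>0$ in the gap. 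If you want to salvage your outline, this gradient bound is the missing idea: replace ``unimodal with a peak bound'' by ``difference quotients of the large\-/radius part are bounded by its value divided by the distance to the far edge, summed over the half\-/gap''.
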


Although the proofs of \cref{t,t:Z} are quite similar, different technical difficulties arise in each case.
In the continuous setting, we have to deal with compactness issues and exceptional sets of measure zero.
In the discrete setting, one inconvenience is that not every integer interval has an integer midpoint.

\subsection{Proof strategy}

Let us explain some ideas of the proofs using the example of the continuous setting.
Our main observation is that for a function $f\colon \R \to [0,\infty)$ satisfying the assumptions of \cref{t}, the local variation bound
\begin{equation}
\label{eq:localvarbound}
\var_{[a,b]}(Mf)
\leq
\var_{[a,b]}(f)
\end{equation}
holds for any real numbers $a < b$ such that $f(a)=Mf(a)$ and $f(b)=Mf(b)$, i.e.\ such that $Mf$ is \emph{attached} to $f$ at \(a\) and \(b\).
Our proof of \cref{t} heavily relies on this property.
The following example shows a typical situation.
Denote
\[
\chi_{[a,b]}(x)
=
\begin{cases*}
1 & if $a<x<b$, \\
1/2 & if $x=a$ or $x=b$, \\
0 & otherwise.
\end{cases*}
\]
\begin{example}%
\label{ex:two_bumps}
Let $c \in (1,3)$ and
$
f
=
\chi_{[-c,-1]} + \chi_{[1,c]}
$.
Then $Mf$ is attached to $f$ at any point $x$ with $1 \leq |x| \leq c$ and
\[
\var_{[-1,1]}(Mf)
=
c^{-1}
<
1
=
\var_{[-1,1]}(f).
\]
The maximal function \(M f\) has a strict local maximum of value $(c-1)/c$ at $0$ and two strict local minima of value $(3c-3)/(4c)$ at $\pm c/3$,
see \cref{fig:two_bumps}.
\end{example}

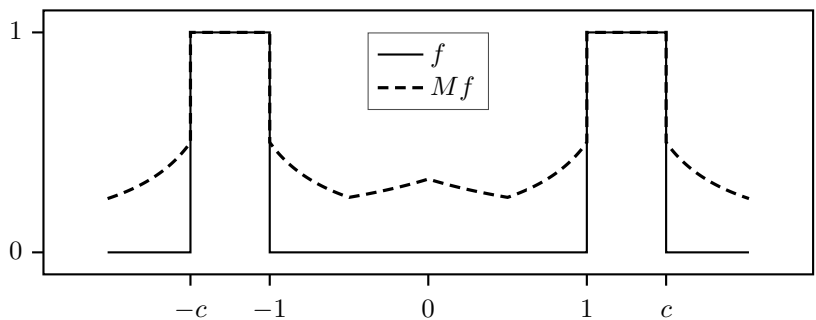
\begin{figure}[b]
\centering
\begin{tikzpicture}
\begin{axis}[
xtick={-3/2,-1,0,1,3/2},
xticklabels={$-c$,$-1$,$0$,$1$,$c$},
ytick={0,1},
]
\addplot [stepfct style]
table {data_twot_f.dat}; 
\addlegendentry{$f$}
\addplot [maxfct style]
table {data_twot_Mf.dat}; 
\addlegendentry{$Mf$}
\end{axis}
\end{tikzpicture}
\caption{%
The functions $f$ and $Mf$ in \cref{ex:two_bumps} with $c=3/2$.
}
\label{fig:two_bumps}
\end{figure}

The calculations leading to \cref{ex:two_bumps,fig:two_bumps}, as well as to \cref{ex:much_variation,fig:much_variation,fig:defp} below, are straightforward because for step functions it holds that
\[
Mf(x) = \sup_{\text{$y \neq x$ is a jump of $f$}} \fint_{x-|x-y|}^{x+|x-y|} |f(z)| \intd z.
\]

The local variation bound \cref{eq:localvarbound} will follow from part~\cref{i:abcd_bounded} of the following result.
An analogue for unbounded intervals is contained in part~\cref{i:abcd_unbounded}.

\begin{proposition}\label{l:abcd}%
Let \(f \colon \R \to [0,\infty)\) be a bounded Lebesgue\-/measurable function
and let \(I \subseteq \R\) be an interval
such that $f(x) = 0$ for almost every $x\in I$.
Then the following holds:
\begin{enumerate}[label=(\arabic*)]
\item\label{i:abcd_bounded}%
If $I = [a,b]$ for some real numbers $a<b$, then
$\var_{[a,(a+b)/2]}(Mf) \leq Mf(a)$ and $\var_{[(a+b)/2,b]}(Mf) \leq Mf(b)$.
Both of these inequalities are strict unless $f$ vanishes almost everywhere on $\R$.
\item\label{i:abcd_unbounded}%
If $I = (-\infty,a]$ or $I=[a,\infty)$ for some real $a$, then $Mf$ is monotone on $I$ and $\var_I(Mf) = Mf(a) - \inf_{x \in I} Mf(x)$.
\end{enumerate}
\end{proposition}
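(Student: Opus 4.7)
The plan splits by the two cases of the proposition. For \cref{i:abcd_unbounded} with $I=[a,\infty)$ (the case $I=(-\infty,a]$ being symmetric), the vanishing of $f$ on $I$ implies that every centred window around $x>a$ picks up mass only from $(-\infty,a)$. Reparametrising by the left endpoint $u=x-r\leq a$ gives
\[
Mf(x)=\sup_{u\leq a}\frac{\int_u^a f(y)\intd y}{2(x-u)},
\]
a supremum of functions each non\=/increasing in $x$. Thus $Mf$ is non\=/increasing on $I$ and the variation formula is immediate.

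For \cref{i:abcd_bounded} I focus on the half\=/interval $[a,m]$ with $m=(a+b)/2$; the other half is symmetric. Since $f=0$ on $[a,b]$ and $x-a\leq b-x$ for $x\in[a,m]$, I decompose $Mf(x)=\max(L(x),B(x))$, where $L(x)$ is the supremum of averages over windows of radius $r\in[x-a,b-x]$ (gathering only left\=/side mass) and $B(x)$ is the supremum over $r\geq b-x$ (windows straddling $[a,b]$). The same reparametrisation as in \cref{i:abcd_unbounded} shows $L$ is non\=/increasing on $[a,m]$. The structural claim driving the proof is that $Mf$ is \emph{V\=/shaped} on $[a,m]$: there exists $x^\ast\in[a,m]$ with $Mf$ non\=/increasing on $[a,x^\ast]$ and non\=/decreasing on $[x^\ast,m]$. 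To prove this I would rule out interior points $x_0\in(a,m)$ admitting $x_1<x_0<x_3$ in $[a,m]$ with $Mf(x_1)<Mf(x_0)>Mf(x_3)$. The option $Mf(x_0)=L(x_0)$ is excluded by the monotonicity of $L$; in the both\=/sides regime, comparing $Mf(x_0\pm\varepsilon)$ with averages over the enlarged windows of radius $r_0+\varepsilon$ centred at $x_0\pm\varepsilon$---each of which contains the optimal window at $x_0$---together with the optimality identity $f(x_0-r_0)+f(x_0+r_0)=2Mf(x_0)$ (interpreted through Lebesgue differentiation of the relevant endpoint averages) forces a contradiction.

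Granted V\=/shape, let $x^\ast$ minimise $Mf$ on $[a,m]$, so that
\[
\var_{[a,m]}(Mf)=(Mf(a)-Mf(x^\ast))+(Mf(m)-Mf(x^\ast)),
\]
and it suffices to prove $Mf(m)\leq 2Mf(x^\ast)$. Let $r_m$ be (near\=/)optimal for $Mf(m)$. Since $f=0$ on $[a,b]$, the radius $r_m=(b-a)/2$ gives a zero average, so whenever $f$ is not almost everywhere zero we have $r_m>(b-a)/2\geq m-x^\ast$. At $x^\ast$ use radius $r_m+(m-x^\ast)$: the associated window contains $[m-r_m,m+r_m]$ and its additional piece lies in $(-\infty,a]$, hence
\[
Mf(x^\ast)\geq\frac{2r_m\,Mf(m)}{2r_m+2(m-x^\ast)}=\frac{r_m}{r_m+(m-x^\ast)}\,Mf(m)>\tfrac12 Mf(m).
\]
Combining these yields $\var_{[a,m]}(Mf)<Mf(a)$ strictly whenever $f$ is not almost everywhere zero; if $f\equiv 0$ a.e.\ then both sides are zero and equality is trivial.

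I expect the main obstacle to be the rigorous proof of V\=/shape. The envelope argument is clean only at interior optimal radii of a sufficiently regular $f$, whereas here $f$ is merely bounded, the supremum defining $Mf(x_0)$ need not be attained, and the boundary values $f(x_0\pm r_0)$ have no intrinsic meaning. This must be addressed by working with near\=/optima, extracting a convergent subsequence, and replacing the pointwise envelope identity with averaged versions over small intervals via the Lebesgue differentiation theorem, together with the continuity of $Mf$ for bounded $f$.
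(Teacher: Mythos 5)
Your part~\cref{i:abcd_unbounded} is fine and coincides with the paper's \cref{l:mong}, and your claim that $L$ is monotone on $[a,(a+b)/2]$ is the mirror image of \cref{l:mon}. The fatal problem is the structural claim on which everything else rests: $Mf$ is \emph{not} V\=/shaped on the half\=/interval, and the hypotheses give you no way to exclude interior strict local maxima. The paper's own \cref{ex:two_bumps} already refutes it: let $f=\chi_{[-c,-1]}+\chi_{[1,c]}$ with $c=3/2$ and apply the proposition with $I=[a,b]=[-1/4,1]$, which is legitimate since $f$ vanishes almost everywhere on $I$ (nothing in the hypotheses forces the mass of $f$ to sit symmetrically about the centre of $I$). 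Then $m=(a+b)/2=3/8$, and $Mf$ has a \emph{strict local maximum} at $0\in(a,m)$, of value $(c-1)/c$. This maximum lies in your both\=/sides regime: at $x=0$ every window with radius $r\in[x-a,b-x]=[1/4,1]$ is contained in $[-1,1]$ and has zero average, so $L$ vanishes near $0$ and $Mf=B$ there. Your proposed contradiction argument cannot succeed, because enlarging a near\=/optimal window of radius $r_0$ at $x_0$ and using nonnegativity only yields
\[
Mf(x_0\pm\varepsilon)\;\geq\;\frac{r_0}{r_0+\varepsilon}\,Mf(x_0),
\]
i.e.\ exactly the gradient bound of \cref{l:gradbd} — which is perfectly compatible with a strict local maximum whose height drops at rate $\varepsilon/r_0$. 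The first\=/order identity $f(x_0-r_0)+f(x_0+r_0)=2Mf(x_0)$ is likewise unavailable: in the example the optimal radius sits at a kink (the edge of the support of $f$), where the two one\=/sided conditions have opposite signs and no contradiction arises. Once V\=/shape fails, your formula $\var_{[a,m]}(Mf)=Mf(a)+Mf(m)-2Mf(x^\ast)$ and the endgame via $Mf(m)\leq 2Mf(x^\ast)$ collapse; so this is a wrong approach, not a technical gap about non\=/attained suprema.

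The pieces you do have are, however, precisely the paper's ingredients, and the repair is to use your two\=/point comparison \emph{quantitatively} instead of trying to extract monotonicity from it. Normalising $[a,b]=[-1,1]$ and working on $[0,1]$, every straddling window at $x$ has radius $r\geq 1+x$, so \cref{l:gradbd} gives $|\Msup f(x)-\Msup f(y)|\leq\frac{|x-y|}{1+\min(x,y)}\max(\Msup f(x),\Msup f(y))$; summing this over an arbitrary partition produces a Riemann sum for $\int_0^1 2(1+x)^{-2}\intd x=1$, whence $\var_{[0,1]}(\Msup f)\leq \Msup f(1)$ (\cref{l:varbd}) \emph{regardless of how often $\Msup f$ oscillates}. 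Strictness is then obtained not from $r_m>(b-a)/2$ but by improving the radius bound to $r\geq 1+x+\delta$ near $x=0$, which makes the integral strictly less than $1$. Finally, combining the oscillating part with the monotone part requires a genuine (if elementary) lemma, namely $\var(\max(g,h))\leq\var(h)$ for $g$ nondecreasing with $g(1)\leq h(1)$ (\cref{l:varmax}), applied after redefining $h$ at the endpoint so that $h(1)=Mf(1)$; your pointwise decomposition $Mf=\max(L,B)$ alone does not yield the variation bound for the maximum.
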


We prove this local variation bound in \cref{s:l} and we apply it in \cref{s:proof_t} to show \cref{t} and hence \cref{tind}.
In \cref{s:abcdZ} we prove an analogous discrete local variation bound which we then apply in \cref{s:proofZ,s:eqZ} to show the discrete \cref{t:Z}.
These proofs can be read mostly independently from \cref{s:l,s:proof_t}.
\Cref{s:emb} contains the embedding argument leading to \cref{p:emb} and the derivation of \cref{tindZ} from \cref{tind}.

Our approach can be compared to the strategy in \cite{AP07} for the uncentred Hardy\--Littlewood maximal function \(\Mu f\).
They show that if $f \colon \R \to \R$ is of bounded variation and satisfies $f(x) = \limsup_{y \to x} f(y)$ for any $x \in \R$, then \(\Mu f\geq f\) and \(\Mu f\) is attached to $f$ at any strict local maximum point of \(\Mu f\).
This can be used to show~\cref{eq:localvarbound} when $Mf$ is replaced by \(\Mu f\) and \(a\) and \(b\) are neighbouring strict local maximum points of \(\Mu f\).

However, in the centred case, $Mf$ is not necessarily attached to $f$ at strict local maxima of $Mf$,
see \cref{ex:two_bumps} above.
We overcome this obstruction by making use of a gradient bound for $Mf$ in the proof of \cref{l:abcd}.
On the other hand, this bound becomes less useful for our purposes if a function fails to satisfy the assumptions of \cref t.
In fact, for general functions of bounded variation, the local variation bound \cref{eq:localvarbound} can fail between some points of attachment.
This prevents us from generalising our results to a substantially larger class of functions than in \cref t.

\begin{example}%
\label{ex:much_variation}
Let $h=2/5$ and $f = \chi_{[-3/2,-1]} + h \cdot \chi_{[-1/2,1/2]} + \chi_{[1,3/2]}$.
Then $f$ is constant in $(-1/2,1/2)$ and $Mf$ is attached to $f$ at any point $x$ with $2 \leq 8|x| \leq 3$, but $Mf$ has a strict local maximum of value $7/15>h$ at~$0$.
In particular, \cref{eq:localvarbound} fails between the points of attachment $a=-1/3$ and $b=1/3$,
see \cref{fig:much_variation}.
\end{example}

\begin{figure}
\centering
\begin{tikzpicture}
\begin{axis}[
xtick={-1,-1/3,0,1/3,1},
xticklabels={$-1$,$a$,$0$,$b$,$1$},
ytick={0,1},
]
\addplot [stepfct style]
table {data_muchvar_f.dat}; 
\addlegendentry{$f$}
\addplot [maxfct style]
table {data_muchvar_Mf.dat}; 
\addlegendentry{$Mf$}
\end{axis}
\end{tikzpicture}
\caption{%
The functions $f$ and $Mf$ in \cref{ex:much_variation}.
}
\label{fig:much_variation}
\end{figure}
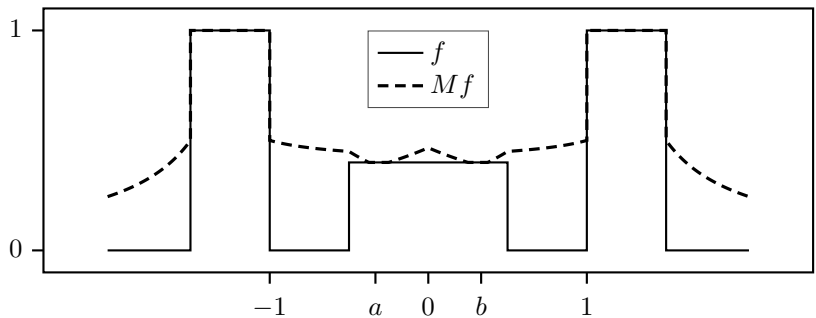

\subsection{Further remarks}

\subsubsection*{Maximisers and maximising sequences}

It follows from our results that maximisers for \cref{e:var_bd} exist in the class of indicator functions and in the larger class in \cref{t}.
However, not every maximising sequence converges pointwise modulo symmetries to a nonzero maximiser, e.g.\ take $c \to 1$ in \cref{ex:two_bumps}.

\subsubsection*{Sobolev variation}

Another common notion of variation is given by the total variation \(|\D f|(\R^d)\) of the distributional derivative \(\D f\), i.e.\ the measure satisfying the integration by parts rule
\[
\int_{\R^d} f\varphi'\intd x
=-
\int_{\R^d}\varphi\intd(\D f)
\]
for all functions \(\varphi\in C^\infty_{\mathrm c}(\R^d)\).
The variation of a function on \(\R^d\) with \(d>1\) is usually defined in this way.
For any function $f\colon\R\to\R$ of bounded variation it holds that \(|\D f|(\R)\leq\var(f)\).
Conversely, if \(|\D f|(\R)<\infty\), then
there exists a function \(\bar f\) equal to \(f\) almost everywhere such that \(\var(\bar f)=|\D f|(\R)\), see e.g.\ \cite[Theorem~7.2]{MR2527916}.
If $f$ satisfies the hypotheses of \cref t, then it follows that
\[
|\D{M f}|(\R)
\leq
\var(Mf)
=
\var(M\bar f)
\leq
\var(\bar f)
=
|\D f|(\R).
\]
Hence \cref{t} remains true for this definition of the variation.

\section*{Acknowledgements}

The first author was partially supported by the Deutsche Forschungsgemeinschaft under Germany's Excellence Strategy, Project 390685813 -- EXC-2047/1, during a stay at the Hausdorff Research Institute for Mathematics.
The second author has been partially supported by the Vilho, Yrjö and Kalle Väisälä Foundation of the Finnish Academy of Science and Letters,
and the Magnus Ehrnrooth foundation.

We would like to thank the supervisors of the first author, Diogo Oliveira e Silva and Jonathan Bennett, and the supervisor of the second author, Juha Kinnunen, for all their support.

\section{Proof of \cref{l:abcd}}
\label{s:l}

Throughout this section,
let $f \colon \R \to [0,\infty)$ be a bounded measurable function.
The following result proves the unbounded case in \cref{l:abcd}\cref{i:abcd_unbounded}.
By symmetry, it suffices to take \(I=[a,\infty)\).

\begin{lemma}%
\label{l:mong}
Let \(a\in\R\) be such that $f(x) = 0$ for almost every $x\geq a$.
Then \(Mf\) is nonincreasing on \([a,\infty)\)
and hence \[\var_{[a,\infty)}(Mf) = Mf(a) - \inf_{x \in [a,\infty)} Mf(x).\]
\end{lemma}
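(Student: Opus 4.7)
The plan is to rewrite $Mf$ on $[a,\infty)$ as the supremum of a family of explicitly nonincreasing functions. Since $f$ vanishes almost everywhere on $[a,\infty)$, for any $x\geq a$ and $r>0$ the interval of averaging $[x-r,x+r]$ contributes only via its part in $(-\infty,a)$. More precisely, for $r\leq x-a$ the average is zero, while for $r>x-a$ we have
\[
\frac1{2r}\int_{x-r}^{x+r} f(y)\intd y
=
\frac1{2r}\int_{x-r}^{a} f(y)\intd y.
\]
So I would substitute $s=x-r$, which ranges over $s<a$, to obtain
\[
Mf(x)
=
\sup_{s<a}\frac1{2(x-s)}\int_s^a f(y)\intd y
\]
for every $x\in[a,\infty)$ (when $f$ vanishes almost everywhere on $\R$ both sides are zero and the claim is trivial).

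Next, I would observe that for each fixed $s<a$, the function
\[
g_s\colon[a,\infty)\to[0,\infty),\qquad g_s(x)=\frac1{2(x-s)}\int_s^a f(y)\intd y,
\]
is nonincreasing in $x$ because $\int_s^a f\geq 0$ is a nonnegative constant and $x\mapsto 1/(2(x-s))$ is nonincreasing on $[a,\infty)\subseteq(s,\infty)$. A pointwise supremum of nonincreasing functions is nonincreasing, so $Mf$ is nonincreasing on $[a,\infty)$.

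For the variation identity, I would invoke the general fact that a monotone function $g\colon I\to\R$ on any interval $I$ satisfies $\var_I(g)=\sup_I g-\inf_I g$; this is immediate from the definition of $\var_I$ via monotone sequences $\phi\colon\Z\to I$, since every telescoping sum collapses. Applying this with $g=Mf$ and noting that $\sup_{[a,\infty)} Mf=Mf(a)$ by the nonincreasing property yields
\[
\var_{[a,\infty)}(Mf)=Mf(a)-\inf_{x\in[a,\infty)}Mf(x),
\]
as required. There is no real obstacle here; the only mild care is to ensure that the substitution is valid and that the trivial case $f\equiv 0$ almost everywhere is handled separately (it gives $Mf\equiv 0$ and hence zero variation).
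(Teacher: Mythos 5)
Your proof is correct and is essentially the paper's argument in a lightly repackaged form: the paper compares $Mf(x)$ and $Mf(y)$ directly via the substitution $r \mapsto r + (y-x)$, which fixes the left endpoint $x-r = s < a$ of the averaging interval, and your family $g_s(x) = \frac{1}{2(x-s)}\int_s^a f(y)\intd y$ makes that same left-endpoint parametrisation explicit before taking suprema. The underlying mechanism (the mass left of $a$ is fixed while the averaging length grows with $x$) and the concluding variation identity for monotone functions are identical in both.
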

\begin{proof}
Let \(a\leq x\leq y\).
By the definition of $Mf$ and the assumptions on $f$,
\[
Mf(x)
=
\sup_{r>x-a}
\fint_{x-r}^{x+r} f(z) \intd z
\geq
\sup_{r>x-a}
\fint_{x-r}^{x+r+2(y-x)} f(z) \intd z
=
Mf(y)
.
\]
This completes the proof.
\end{proof}

The rest of this section is devoted to the proof of \cref{l:abcd}\cref{i:abcd_bounded}, i.e.\ the case that $I=[a,b]$ for some real numbers $a<b$.
It suffices to consider the special case that $a=-1$ and $b=1$ and to prove the strict inequality
\begin{equation}\label{e:abcd01}
\var_{[0,1]}(Mf) < Mf(1)
\end{equation}
under the assumption that $f(x) = 0$ for almost every $x \in [-1,1]$ and that $f$ does not vanish almost everywhere on $\R$.
The general case follows from this because for any nonconstant affine map $\phi \colon \R \to \R$ we have that $M(f \circ \phi)(1) = Mf(\phi(1))$ and
\[
\var_{\phi([0,1])} (Mf)
= \var_{[0,1]} ((Mf) \circ \phi)
= \var_{[0,1]} (M (f \circ \phi)).
\]

For the proof of \cref{e:abcd01}
we first note that $Mf$ restricted to $[0,\infty)$ is the pointwise maximum of the auxiliary maximal functions $\Mnot f, \Msup f \colon [0,\infty) \to [0,\infty)$ defined by
\begin{align*}
\Mnot f(x)
=
\sup_{r \leq 1+x}
\fint_{x-r}^{x+r} f(y) \intd y,
\qquad
\Msup f(x)
&=
\sup_{r \geq 1+x}
\fint_{x-r}^{x+r} f(y) \intd y,
\end{align*}
see \cref{fig:defp} for an example.
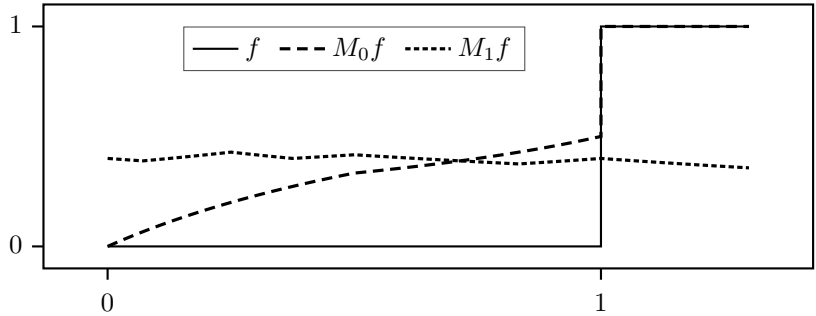
\begin{figure}[b]
\centering
\begin{tikzpicture}
\begin{axis}[
xtick={0, 1},
ytick={0, 1},
legend columns=3,
legend style={at={(axis cs:0.5,1)}},
]
\addplot [stepfct style]
table {data_defp_f.dat}; 
\addlegendentry{$f$\;\,}
\addplot [maxfct style]
table {data_defp_Mnotf.dat}; 
\addlegendentry{$\Mnot f$\;\,}
\addplot [maxfct style 2]
table {data_defp_Msupf.dat}; 
\addlegendentry{$\Msup f$}
\end{axis}
\end{tikzpicture}
\caption{The auxiliary maximal functions $\Mnot f$ and $\Msup f$ on $[0,1]$ when $f = \chi_{[-5/2, -2]} + \chi_{[-3/2, -1]} + \chi_{[1, 2]} + \chi_{[3, 7/2]}$.}
\label{fig:defp}
\end{figure}%
Of these, $\Msup f$ only permits averages over large radii.
Based on this, our first lemma bounds the difference quotients of $\Msup f$.

\begin{lemma}%
\label{l:gradbd}
Let $x,y \geq 0$ be distinct and let $r \geq 1+x$ be such that \[\Msup f(x) = \sup_{s \geq r} \fint_{x-s}^{x+s} f(z)\intd z.\]
Then,
\[
\frac{\Msup f(x) - \Msup f(y)}{|x-y|}
\leq
\frac{\Msup f(x)}{r+|x-y|}
\leq
\frac{\Msup f(y)}r.
\]
\end{lemma}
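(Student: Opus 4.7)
The two displayed inequalities turn out to be algebraically equivalent: multiplying the first by $|x-y|(r+|x-y|)>0$ and collecting terms yields $\Msup f(x)\cdot r\leq \Msup f(y)\cdot(r+|x-y|)$, which is simply a rearrangement of the second. So it suffices to prove just the right-hand inequality $\frac{\Msup f(x)}{r+|x-y|}\leq \frac{\Msup f(y)}{r}$.

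The plan is to approximate the supremum defining $\Msup f(x)$ by large admissible radii and enlarge them into intervals centred at $y$, exploiting $f\geq 0$. Given $\epsilon>0$, the hypothesis on $r$ supplies some $s\geq r$ with $\fint_{x-s}^{x+s} f\geq \Msup f(x)-\epsilon$. Since $[x-s,x+s]\subseteq[y-(s+|x-y|),y+(s+|x-y|)]$ and $f\geq 0$, enlarging the interval can only enlarge the integral, giving
\[
\fint_{y-(s+|x-y|)}^{y+(s+|x-y|)} f \geq \frac{s}{s+|x-y|}\fint_{x-s}^{x+s} f \geq \frac{r}{r+|x-y|}\bigl(\Msup f(x)-\epsilon\bigr),
\]
where the last step uses the monotonicity of $s\mapsto s/(s+|x-y|)$ together with $s\geq r$.

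To convert the left-hand side into a lower bound for $\Msup f(y)$, I need the enlarged radius $s+|x-y|$ to be admissible in the supremum defining $\Msup f(y)$, i.e.\ $s+|x-y|\geq 1+y$. This will follow from $s\geq r\geq 1+x$ combined with the elementary inequality $x+|x-y|\geq y$. Granted this, the left-hand side of the display is bounded by $\Msup f(y)$, and sending $\epsilon\to 0$ yields $\frac{r}{r+|x-y|}\Msup f(x)\leq \Msup f(y)$, as required.

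I expect no serious obstacle here: the only things to watch are the algebraic equivalence of the two inequalities (which is easy to miss at first glance) and the admissibility check for the radius $s+|x-y|$; once both are noted, the remaining argument reduces to a routine enlargement-of-intervals estimate enabled by $f\geq 0$.
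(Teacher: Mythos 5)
Your proof is correct and takes essentially the same route as the paper's: both arguments approximate the supremum defining $\Msup f(x)$ by some $s \geq r$, enlarge $[x-s,x+s]$ to the interval of radius $s+|x-y|$ centred at $y$ (with the admissibility check $s+|x-y| \geq 1+y$), and use the nonnegativity of $f$ together with monotonicity in $s$ to pass from $s$ to $r$. The only difference is organisational: you prove the second inequality and deduce the first via the algebraic equivalence $\Msup f(x)\cdot r \leq \Msup f(y)\cdot(r+|x-y|)$, whereas the paper proves the first and obtains the second ``after rearranging terms'' --- the same equivalence read in the opposite direction.
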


Note that by the definition of $\Msup f$, we can always take \(r\) to be at least \(1+x\).
The lemma also holds for \(Mf\) instead of $\Msup f$, but then we are not guaranteed a good lower bound on $r$.

\begin{proof}
We have that $\Msup f(x) < \infty$ since $f$ is bounded.
Hence, for any $\epsilon > 0$ there exists an $s \geq r$ such that $(1-\epsilon) \Msup f(x) \leq \fint_{x-s}^{x+s} f(z) \intd z$ and therefore,
\begin{align*}
(1-\epsilon)\Msup f(x) - \Msup f(y)
&\leq
\fint_{x-s}^{x+s} f(z) \intd z
- \fint_{y-s-|x-y|}^{y+s+|x-y|} f(z) \intd z \\
&\leq
\Bigl(
\frac1{2s}-\frac1{2s + 2|x-y|}
\Bigr)
\int_{x-s}^{x+s} f(z) \intd z
\\
&=
\frac{|x-y|}{s+|x-y|} \fint_{x-s}^{x+s} f(z) \intd z \\
&\leq
\frac{|x-y|}{r+|x-y|} \Msup f(x).
\end{align*}
The first inequality uses the definition of $\Msup f(y)$ together with the fact that $s+|x-y| \geq 1+y$.
In the second inequality, we use the nonnegativity of $f$ to reduce the domain of integration of the second integral.
The last two relations follow from definitions.
Now the first inequality in the lemma follows by letting \(\epsilon\to0\).
The second inequality follows after rearranging terms.
\end{proof}

Bounds similar to \cref{l:gradbd} have frequently appeared in the literature, including in higher dimensions.
The related inequality \(|\nabla M_\alpha f(x)|\leq C M_{\alpha-1} f(x)\)
for the fractional maximal function \(M_\alpha f\) with \(1\leq\alpha\leq d\) was proved in \cite{KS03}.
A generalisation to the range \(0\leq\alpha\leq d\) can be found in \cite[Section~2.5]{beltran2021continuity}.

We now employ the previous result to prove a local variation bound for $\Msup f$.
The strictness of this inequality will be crucial to the characterisation of maximisers in our results.

\begin{lemma}%
\label{l:varbd}
It holds that $\var_{[0,1]}(\Msup f) \leq \Msup f(1)$
and this inequality is strict
if \(f(x)=0\) for almost every $x \in [-1,1]$ and $f(x) > 0$ for any $x$ in some set of positive measure.
\end{lemma}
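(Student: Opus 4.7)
The plan is to derive from \cref{l:gradbd} two key estimates for $\Msup f$ on $[0, 1]$ --- the monotonicity of $x \mapsto (1 + x) \Msup f(x)$ and a pointwise gradient bound --- combine them in a short integral calculation to obtain the non\-/strict inequality, and then upgrade it to a strict one via a rigidity argument at $x = 0$.

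For the monotonicity I apply \cref{l:gradbd} at $x \in [0, \infty)$ with the minimal admissible radius $r = 1 + x$: its first inequality rearranges to $(1 + x) \Msup f(x) \leq (1 + y) \Msup f(y)$ for $0 \leq x \leq y$, so $x \mapsto (1 + x) \Msup f(x)$ is nondecreasing on $[0, \infty)$ and $(1 + x) \Msup f(x) \leq 2 \Msup f(1)$ on $[0, 1]$. For the gradient bound, the second inequality of \cref{l:gradbd} at $x$ with $r = 1 + x$ handles the case $\Msup f(x) \geq \Msup f(y)$, while its first inequality at $y$ with $r = 1 + y$ handles the opposite case via $1 + y + (y - x) \geq 1 + x$, giving
\[
|\Msup f(y) - \Msup f(x)| \leq (y - x) \Msup f(y) / (1 + x), \qquad 0 \leq x \leq y.
\]
Since $\Msup f \leq \|f\|_\infty < \infty$, this makes $\Msup f$ Lipschitz on $[0, 1]$ and hence absolutely continuous, and divides to give $|(\Msup f)'(x)| \leq \Msup f(x) / (1 + x)$ at almost every $x$.

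Combining both estimates,
\begin{align*}
\var_{[0, 1]}(\Msup f)
= \int_0^1 |(\Msup f)'(x)| \intd x
&\leq \int_0^1 \frac{\Msup f(x)}{1 + x} \intd x
= \int_0^1 \frac{(1 + x) \Msup f(x)}{(1 + x)^2} \intd x \\
&\leq \int_0^1 \frac{2 \Msup f(1)}{(1 + x)^2} \intd x
= \Msup f(1).
\end{align*}

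To promote this to the strict inequality under the additional hypotheses, I will show that $\Msup f(0) < 2 \Msup f(1)$. Suppose for contradiction that $\Msup f(0) = 2 \Msup f(1)$ and pick $s_\epsilon \geq 1$ with $(1 - \epsilon) \Msup f(0) \leq \fint_{-s_\epsilon}^{s_\epsilon} f$. Since $[-s_\epsilon, 2 + s_\epsilon]$ is an admissible averaging window for $\Msup f(1)$, repeating the enlargement step in the proof of \cref{l:gradbd} gives $\Msup f(1) \geq \frac{s_\epsilon}{s_\epsilon + 1} (1 - \epsilon) \Msup f(0)$. If $\Msup f(1) > 0$, this forces $s_\epsilon \to 1$ as $\epsilon \to 0$, so continuity of $s \mapsto \fint_{-s}^s f$ together with the vanishing of $f$ on $[-1, 1]$ yields $\Msup f(0) = \fint_{-1}^1 f = 0$, a contradiction. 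If $\Msup f(1) = 0$, then $\Msup f(0) = 0$ as well, forcing $\int_{-s}^s f = 0$ for every $s \geq 1$ and hence $f = 0$ almost everywhere, again contradicting the hypothesis. Therefore $\Msup f(0) < 2 \Msup f(1)$, and by continuity of $\Msup f$ the strict inequality $(1 + x) \Msup f(x) < 2 \Msup f(1)$ persists on some right\-/neighbourhood of $0$, which has positive measure, making the third step in the displayed chain above strict. The main technical obstacle is this final rigidity step, which mimics the proof of \cref{l:gradbd} itself in order to pin down the behaviour of the optimal radius in the definition of $\Msup f(0)$.
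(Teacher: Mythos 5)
Your proof is correct. For the non-strict bound you essentially follow the route that the paper itself sketches in \cref{r:abcd_sobolevproof}: Lipschitz continuity of $\Msup f$ from \cref{l:gradbd}, the identity $\var_{[0,1]}(\Msup f)=\int_0^1|(\Msup f)'(x)|\intd x$, and the pointwise bound $|(\Msup f)'(x)|\leq \Msup f(x)/(1+x)$ (your case analysis for the two-sided difference bound checks out, since $1+y+(y-x)\geq 1+x$ for $0\leq x\leq y$). Your reformulation of the amplitude bound as monotonicity of $x\mapsto(1+x)\Msup f(x)$ --- obtained by rearranging the first inequality of \cref{l:gradbd} with $r=1+x$ --- is a clean packaging of what the paper uses implicitly through the second inequality of \cref{l:gradbd}; note that the paper's \emph{main} proof deliberately avoids differentiation theory altogether via Riemann sums, which your route (like the remark's) trades for the a.e.\ differentiability of Lipschitz functions. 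Where you genuinely diverge is the strictness mechanism. The paper improves the \emph{gradient} bound on a whole neighbourhood of $0$: by continuity of $(x,s)\mapsto\fint_{x-s}^{x+s}f(y)\intd y$ at $(0,1)$ and the vanishing of $f$ on $[-1,1]$, all near-optimal radii satisfy $s\geq 1+x+\delta$ for $x\in[0,\delta)$, which is the uniform statement its Riemann-sum (and the remark's) formulation needs. You instead leave the gradient bound untouched and show the \emph{amplitude} cap $(1+x)\Msup f(x)\leq 2\Msup f(1)$ is strict at the single point $x=0$: assuming equality, your enlargement estimate $\Msup f(1)\geq\frac{s_\epsilon}{s_\epsilon+1}(1-\epsilon)\Msup f(0)$ (which I verified; the window $[-s_\epsilon,2+s_\epsilon]$ is indeed admissible at $x=1$) forces $s_\epsilon\to1$, hence $\Msup f(0)=\fint_{-1}^1 f=0$, contradicting $\Msup f(0)>0$; strictness then propagates to a positive-measure set by continuity of $\Msup f$. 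Both mechanisms exploit the same underlying fact --- the radius-$1$ average at the origin vanishes --- but yours needs it only at one point and one radius rather than uniformly, which is a mild but genuine simplification of the rigidity step, at the cost of the extra endpoint argument and the reliance on the Sobolev-type identity for the variation.
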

\begin{proof}
First assume that \(f(x)=0\) for almost every $x \in [-1,1]$ and $f(x) > 0$ for any $x$ in some set of positive measure.
Then $\Msup f(x) > 0$ for any $x \geq 0$.
By \cref{l:gradbd}, $\Msup f$ is continuous.
Since the map $(x,s) \mapsto \fint_{x-s}^{x+s} f(y) \intd y$ is continuous at $(x,s) = (0,1)$ and $f(y)=0$ for almost every \(y\in[-1,1]\), this implies the existence of a \(\delta \in (0,1)\) such that for any \(x \in [0, \delta)\),
\[
\Msup f(x)
=
\sup_{s\geq 1+x+\delta}
\fint_{x-s}^{x+s}f(y)\intd y
\]
and hence $f$ and $x$ satisfy the hypotheses of \cref{l:gradbd} with $r = 1+x+\delta$.
Without the additional assumptions that \(f(x)=0\) for almost every \(x\in[-1,1]\) and that $f(x)>0$ for any $x$ in some set of positive measure, this remains true for \(\delta=0\).

In order to estimate the variation of $\Msup f$ on $[0,1]$, we let $\epsilon > 0$, $k \geq 1$ and \[0=x_0<x_1<\ldots<x_k=1\]
be such that $x_{i+1} - x_i < \epsilon$ for $0 \leq i \leq k-1$.
We write $\delta_i = \delta$ if $x_{i+1} < \delta$ and $\delta_i = 0$ otherwise.
By the two inequalities in \cref{l:gradbd},
\begin{align*}
\sum_{i=0}^{k-1}
|\Msup f(x_i) - \Msup f(x_{i+1})|
&\leq
\sum_{i=0}^{k-1}
\frac{x_{i+1}-x_i}{1+x_i+\delta_i}
\max(\Msup f(x_i),\Msup f(x_{i+1}))
\\
&\leq
\sum_{i=0}^{k-1}
\frac{(2+\delta_i)(x_{i+1}-x_i)}{(1+x_i+\delta_i)^2}
\Msup f(1).
\end{align*}
By viewing the last sum as a Riemann sum, taking the supremum over all possible choices of $k$ and $x_i$ as above and letting $\epsilon \to 0$ we obtain that
\begin{align*}
\var_{[0,1]}(\Msup f)
&\leq
\biggl(
\int_0^{\delta}
\frac{2+\delta}{(1+x+\delta)^2}
\intd x
+
\int_{\delta}^1
\frac2{(1+x)^2}
\intd x
\biggr)
\Msup f(1)
\\
&\leq
\int_0^1
\frac2{(1+x)^2}
\intd x
\cdot
\Msup f(1)
\\
&=
\Msup f(1)
\end{align*}
and the second inequality is strict if \(\delta>0\).
This completes the proof.
\end{proof}

\begin{remark}
\label{r:abcd_sobolevproof}
Let us sketch a shorter but less elementary version of the second part of the above proof.
By \cref{l:gradbd}, the auxiliary maximal function \(\Msup f\) is Lipschitz continuous.
Hence it is differentiable almost everywhere and
\[\var_{[0,1]}(\Msup f)=\int_0^1|(\Msup f)'(x)|\intd x.\]
At any point of differentiability $x \in (0,1)$ we have by \cref{l:gradbd} that
\[|(\Msup f)'(x)|\leq\frac{\Msup f(x)}{1+x}\leq\frac{2\Msup f(1)}{(1+x)^2}\]
and the first inequality is strict in some neighbourhood of $0$.
Plugging this into the above formula for \(\var_{[0,1]}(\Msup f)\) yields \cref{l:varbd}.
\end{remark}

The next lemma concerns the other auxiliary maximal function $\Mnot f$.

\begin{lemma}%
\label{l:mon}
Let $f(x) = 0$ for almost every $x \in [-1,1]$.
Then $\Mnot f$ is nondecreasing on $[0,1]$.
\end{lemma}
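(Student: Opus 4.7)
The plan is to express $\Mnot f(x)$ in a form where monotonicity becomes transparent, by parameterising by the right endpoint $t = x+r$ of the averaging interval. The admissibility condition $r \le 1+x$ forces the left endpoint $x-r = 2x-t$ to satisfy $2x-t \ge -1$, so the hypothesis $f = 0$ almost everywhere on $[-1,1]$ reduces the integral to $\int_1^t f(z)\intd z$ when $t > 1$, and makes it vanish when $t \le 1$. Averages with $t \le 1$ can therefore be dropped from the supremum without affecting its value, and I would record the resulting identity
\[
\Mnot f(x) = \sup_{1 < t \le 1+2x} \frac{1}{2(t-x)} \int_1^t f(z)\intd z
\]
for $x \in [0,1]$, with the convention that the supremum is zero when the index set is empty (which occurs only at $x=0$).

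In this form monotonicity is immediate. For $0 \le x \le y \le 1$ and any $t \in (1, 1+2x]$, one has $t \le 1+2x \le 1+2y$, so the same $t$ is admissible for $\Mnot f(y)$. Since $y \le 1 < t$, both $t-x$ and $t-y$ are positive and $t-y \le t-x$; combined with $\int_1^t f(z)\intd z \ge 0$, this yields
\[
\frac{1}{2(t-x)} \int_1^t f(z)\intd z
\le \frac{1}{2(t-y)} \int_1^t f(z)\intd z
\le \Mnot f(y).
\]
Taking the supremum over $t$ gives $\Mnot f(x) \le \Mnot f(y)$.

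The only delicate step is the initial reformulation: I would need to justify carefully that radii with $x+r \le 1$ contribute zero because the whole interval $[x-r,x+r]$ then lies inside $[-1,1]$, which in turn relies crucially on the restriction $r \le 1+x$ built into the definition of $\Mnot f$ (without it, the left endpoint $x-r$ could reach mass of $f$ sitting below $-1$). After that the argument is purely algebraic, with no analytic input beyond the nonnegativity of $f$.
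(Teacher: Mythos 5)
Your proof is correct and is essentially the paper's own argument in a different parameterisation: the paper restricts to radii $r>1-x$, then compares the average over $[x-r,x+r]$ with the average over $[x-r+2(y-x),x+r]$, an interval with the same right endpoint but centred at $y$, using exactly your observation that the discarded left portion lies in $[-1,1]$ where $f$ vanishes almost everywhere. Your substitution $t=x+r$ just makes this bookkeeping explicit, so both proofs rest on the same mechanism.
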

\begin{proof}
This is similar to the proof of \cref{l:mong}.
Let \(0 < x \leq y \leq 1\).
Then,
\[
\Mnot f(x)
=
\sup_{1-x<r\leq1+x}
\fint_{x-r}^{x+r} f(z) \intd z
\leq
\sup_{1-x<r\leq1+x}
\fint_{x-r+2(y-x)}^{x+r} f(z) \intd z
\leq
\Mnot f(y)
.
\]
Since $\Mnot f(0)=0$ and $\Mnot f$ is nonnegative, this completes the proof.
\end{proof}

We have established the monotonicity of $\Mnot f$ in \cref{l:mon} and a variation bound for $\Msup f$ in \cref{l:varbd}.
The next result will allow us to deduce a variation bound for the pointwise maximum $M f = \max(\Mnot f, \Msup f)$.

\begin{lemma}%
\label{l:varmax}
Let $g,h \colon [0,1] \to \R$ be functions such that $g(1) \leq h(1)$ and let $g$ be nondecreasing.
Then $\var_{[0,1]}(\max(g,h)) \leq \var_{[0,1]}(h)$.
\end{lemma}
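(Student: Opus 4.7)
Set $F=\max(g,h)$. The plan is to show that for every finite monotone sequence $x_0<x_1<\dots<x_n$ in $[0,1]$, the sum $S=\sum_{i=0}^{n-1}|F(x_{i+1})-F(x_i)|$ is at most $\var_{[0,1]}(h)$; this is equivalent to the lemma. Appending $1$ to the sequence only adds a nonnegative term, and $F(1)=h(1)$ by the hypothesis $g(1)\leq h(1)$, so I may assume $x_n=1$ and in particular $F(x_n)=h(x_n)$.

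\textbf{Run decomposition.} Let $E=\{i:g(x_i)>h(x_i)\}$, so that $F(x_i)=g(x_i)$ on $E$, $F(x_i)=h(x_i)$ off $E$, and $n\notin E$. I would decompose the indices into maximal consecutive runs in $E$, and accordingly partition the segments $[x_i,x_{i+1}]$ into internal segments of a run (both endpoints in $E$), boundary segments (exactly one endpoint in $E$), and neutral segments (both endpoints in $E^c$), the last of which contribute exactly $|h(x_{i+1})-h(x_i)|$ to $S$.

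\textbf{Bounding each run.} For a run $\{j,\dots,j+k\}$ with $j>0$, the internal segments telescope to $g(x_{j+k})-g(x_j)$, and the right boundary segment contributes $h(x_{j+k+1})-g(x_{j+k})$, which is nonnegative because $h(x_{j+k+1})\geq g(x_{j+k+1})\geq g(x_{j+k})$ by monotonicity of $g$ and the definition of $E^c$. Thus the combined contribution of the run and its boundary segments is
\[
|g(x_j)-h(x_{j-1})|+h(x_{j+k+1})-g(x_j).
\]
I would split on the sign of $g(x_j)-h(x_{j-1})$. If $g(x_j)\geq h(x_{j-1})$, the expression collapses to $h(x_{j+k+1})-h(x_{j-1})\leq\var_{[x_{j-1},x_{j+k+1}]}(h)$. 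Otherwise it equals $h(x_{j-1})+h(x_{j+k+1})-2g(x_j)$; using $g(x_j)>h(x_j)$ (since $x_j\in E$) to replace $g(x_j)$ by $h(x_j)$ turns this into $h(x_{j-1})+h(x_{j+k+1})-2h(x_j)$, which is exactly the variation of $h$ witnessed by the three-point subpartition $\{x_{j-1},x_j,x_{j+k+1}\}$, because $h(x_j)$ lies strictly below both $h(x_{j-1})$ and $h(x_{j+k+1})$. For a run starting at $x_0$, there is no left boundary and the analogous bound reads $h(x_{k+1})-g(x_0)\leq h(x_{k+1})-h(x_0)$, using $h(x_0)\leq g(x_0)$.

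\textbf{Aggregation and main obstacle.} Summing the per-run bounds together with the neutral segment contributions yields $S\leq\var_{[0,1]}(h)$, because the intervals appearing tile $[x_0,1]$ up to shared endpoints and the variation is additive along such decompositions. The step I expect to be the main obstacle is the second case of the per-run bound, where the left boundary segment contributes against the monotonicity of $g$; its resolution rests on the observation that $x_j\in E$ forces a strict $h$-valley at $x_j$ deep enough for the three-point subpartition to absorb the extra cost.
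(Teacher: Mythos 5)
Your proof is correct, but it takes a genuinely different route from the paper's. You partition the sample points by which function realises the maximum, via $E=\{i : g(x_i)>h(x_i)\}$, and charge each maximal excursion of $g$ above $h$, together with its two boundary segments, locally to $\var(h)$ on its own subinterval: in the easy case by the two endpoints, and in the case where the left boundary is crossed downwards by the three-point witness $x_{j-1}<x_j<x_{j+k+1}$ — which is valid, since $x_j\in E$ and the monotonicity of $g$ give $h(x_j)<g(x_j)<h(x_{j-1})$ and $h(x_j)<g(x_j)\leq g(x_{j+k+1})\leq h(x_{j+k+1})$, so the valley of $h$ at $x_j$ is strict on both sides. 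The pieces are then summed using superadditivity of the variation over intervals with pairwise disjoint interiors. The paper instead works with $u=\max(g,h)$ directly and partitions by the peak set $P=\{i \mid h(x_i)\geq u(x_{i+1})\}$: between consecutive elements of $P$ the function $u$ is nondecreasing, at each $p\in P\setminus\{-1\}$ one has $u(x_p)=h(x_p)$ with a drop afterwards, so the entire sum telescopes in one stroke to $\sum_j \bigl(2h(x_{p(j)})-u(x_{p(j-1)+1})-u(x_{p(j)+1})\bigr)$, and the single inequality $u\geq h$ converts this into one zigzag variation sum for $h$ along a single monotone sequence of points. Both arguments rest on the same two facts — monotonicity of $g$ propagates to the maximum wherever $g$ dominates, and $\max(g,h)\geq h$ with equality off the excursions — but your run decomposition is local and case-based where the paper's is one global telescoping identity: yours makes the geometry explicit (each upcrossing of $g$ sits over an $h$-valley deep enough to pay for it), while the paper's avoids the case split and the appeal to interval additivity of the variation. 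The details you leave implicit (the run beginning at $x_0$, taking absolute values before invoking $\var$, and the superadditivity step) are all routine and check out.
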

\begin{proof}
Write $u = \max(g,h)$.
We need to show that for any $k \geq 1$ and any \[0 = x_0 < x_1 < \ldots < x_k = 1\] it holds that
\[
\sum_{i=0}^{k-1}
|u(x_i)-u(x_{i+1})|
\leq \var_{[0,1]}(h).
\]
Write $x_{k+1} = 1$ and let $p(-1) < p(0) < \ldots < p(\ell)$ be the elements of the set
\[
P = \{-1\} \cup \{i \in \{0,\ldots,k\} \mid h(x_i) \geq u(x_{i+1})\}.
\]
Clearly, $p(-1)=-1$.
Since $x_k=x_{k+1}=1$ and by assumption, $h(x_k) = u(x_{k+1})$ and hence $p(\ell)=k$.
If $i \in \{0,\ldots,k\} \setminus P$, then $h(x_i) < u(x_{i+1})$.
Since $g$ is nondecreasing, we also have that $g(x_i) \leq g(x_{i+1})$ and hence $u(x_i) \leq u(x_{i+1})$.
On the other hand, if $i \in P \setminus \{-1\}$, then
\[
h(x_i) \geq u(x_{i+1}) \geq g(x_{i+1}) \geq g(x_i).
\]
Hence, $h(x_i) = u(x_i)$ and $u(x_i)\geq u(x_{i+1})$.
This shows that for any $0 \leq j \leq \ell$,
\[
u(x_{p(j-1)+1})
\leq
u(x_{p(j-1)+2})
\leq
\ldots
\leq
u(x_{p(j)})
=
h(x_{p(j)})
\geq
u(x_{p(j)+1})
.
\]
We conclude that
\begin{align*}
\sum_{i=0}^{k-1}
|u(x_i)-u(x_{i+1})|
&=
\sum_{j=0}^\ell
\sum_{i=p(j-1)+1}^{p(j)}
|u(x_i)-u(x_{i+1})|
\\
&=
\sum_{j=0}^\ell
2h(x_{p(j)}) - u(x_{p(j-1)+1}) - u(x_{p(j)+1})
\\
&\leq
\sum_{j=0}^\ell
2h(x_{p(j)}) - h(x_{p(j-1)+1}) - h(x_{p(j)+1})
\\
&\leq
\var_{[0,1]}(h).
\end{align*}
This completes the proof.
\end{proof}

We are now ready to prove \cref{e:abcd01}.
Let $f(x) = 0$ for almost every $x \in [-1,1]$ and
let $h \colon [0,1] \to [0,\infty)$ be the function defined by \(h(x)=\Msup f(x)\) for \(0\leq x<1\) and \(h(1)=Mf(1)\).
Then \(\Mnot f(1) \leq h(1)\) and $Mf$ restricted to $[0,1]$ is the pointwise maximum of $\Mnot f$ and $h$.
Hence by an application of \cref{l:mon,l:varmax} and then \cref{l:varbd},
\[
\var_{[0,1]}(Mf)
\leq
\var_{[0,1]} (h)
\leq
\var_{[0,1]}(\Msup f)
+
Mf(1)-\Msup f(1)
\leq
M f(1).
\]
The last inequality is strict if $f$ does not vanish almost everywhere on $\R$.
This shows \cref{e:abcd01} and hence completes the proof of \cref{l:abcd}.

\begin{remark}
One can show that $\Mnot f$ is also Lipschitz continuous on $[0,1]$ and that \(\Mnot f\) and \(\Msup f\) do not coincide at more than one point in $[0,1]$ if $f$ does not vanish almost everywhere on $\R$.
Let us only sketch a proof of the fact that if $y \in [0,1]$ is such that $\Mnot f(y) \geq \Msup f(y)$, then $\Mnot f(x) > \Msup f(x)$ for any $x \in (y,1]$.
By \cref{l:gradbd},
\[
\frac{\Msup f(x) - \Msup f(y)}{x-y}
\leq
\frac{\Msup f(y)}{1+x}.
\]
Similarly as in the proofs of \cref{l:gradbd,l:mon}, one can show that
\[
\frac{\Mnot f(x) - \Mnot f(y)}{x-y}
\geq
\frac{\Mnot f(y)}{1-x+2y}.
\]
Since $\Mnot f(y) \geq \Msup f(y) > 0$ and $y<x$ it follows that $\Mnot f(x) > \Msup f(x)$.
\end{remark}

\section{Proof of \cref{t}}\label{s:proof_t}

Throughout this section, let $f \colon \R \to [0,\infty)$ be a function of bounded variation such that for almost every $x \in \R$ we have that $f(x) = 0$ or $f(x) = Mf(x)$.
In order to prove \cref{t}, we need to show the inequality
\begin{equation}
\label{e:var1}
\var(Mf) \leq \var(f)
\end{equation}
and determine its cases of equality.
We will accomplish this by using a certain canonical representative $\bar f$ whose properties facilitate the application of \cref{l:abcd}.
In \cref{s:repr}, we define $\bar f$, show that $f$ and $\bar f$ agree almost everywhere and that
\begin{equation}
\label{e:varfbarf}
\var(\bar f)\leq\var(f).
\end{equation}
There, we also establish some further properties of $\bar f$.
In \cref{s:appl_abcd}, we apply \cref{l:abcd} to show \cref{e:var1} for $\bar f$, i.e.\ we show that
\begin{equation}
\label{e:var1bar}
\var(Mf) \leq \var(\bar f)
.
\end{equation}
Inequalities \cref{e:varfbarf,e:var1bar} together imply \cref{e:var1}.
In \cref{s:equality}, we characterise the cases of equality in \cref{e:var1}
by characterising the cases of equality in \cref{e:var1bar}
and then characterising the cases of equality in \cref{e:varfbarf} under the assumption that equality holds in \cref{e:var1bar}.

\subsection{Canonical representative}\label{s:repr}

Let us define a function $\bar f \colon \R \to [0,\infty)$ as follows.
If $x \in \R$ is such that
\begin{equation}\label{eq:pointaverage}
\limsup_{r\searrow0}
\fint_{x-r}^{x+r}
f(y)\intd y
= 0,
\end{equation}
then we let $\bar f(x)=0$ and otherwise we let $\bar f(x) = Mf(x)$.
This \emph{canonical representative} is related to but distinct from the homonymous object in \cite{AP07}.
By the Lebesgue differentiation theorem and the assumption on $f$, we have that $f(x) = \bar f(x)$ for almost every $x \in \R$ and hence $Mf(x) = M\bar f(x)$ for any $x \in \R$.
Since $f$ is of bounded variation, its one\-/sided limits exist at any point.
It follows that \cref{eq:pointaverage} can be rewritten without the use of an integral, but we will not need this.

The following lemma will be used multiple times throughout this section.

\begin{lemma}\label{l:semicont}
The maximal function $Mf$ is lower semi\-/continuous, i.e.\ for any $x \in \R$ it holds that
$
\liminf_{y \to x} Mf(y) \geq Mf(x)
$.
\end{lemma}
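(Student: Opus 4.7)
The plan is to write $Mf$ as a pointwise supremum of continuous functions, since any such supremum is automatically lower semi-continuous.

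First, I would observe that $f$ is bounded: being of bounded variation and nonnegative, $f$ satisfies $\|f\|_\infty\leq\var(f)+\liminf_{|y|\to\infty}f(y)<\infty$. For each fixed $r>0$, define the averaging function
\[
A_r(y)=\fint_{y-r}^{y+r}f(z)\intd z.
\]
A short calculation using the boundedness of $f$ shows that $A_r$ is (Lipschitz) continuous in $y$: for $y,y'\in\R$,
\[
|A_r(y)-A_r(y')|\leq\frac{1}{2r}\left(\int_{[y-r,y+r]\triangle[y'-r,y'+r]}f(z)\intd z\right)\leq\frac{\|f\|_\infty}{r}|y-y'|.
\]
(Even without boundedness, continuity follows from absolute continuity of the Lebesgue integral, but boundedness gives it cleanly.) Since $f$ is nonnegative, $|f|=f$ and hence $Mf(y)=\sup_{r>0}A_r(y)$.

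Next, to establish lower semi-continuity at a given $x\in\R$, I would fix $\epsilon>0$ and choose $r>0$ with $A_r(x)>Mf(x)-\epsilon$. By continuity of $A_r$ at $x$, there exists $\delta>0$ such that $A_r(y)>A_r(x)-\epsilon$ whenever $|y-x|<\delta$. For all such $y$,
\[
Mf(y)\geq A_r(y)>A_r(x)-\epsilon>Mf(x)-2\epsilon,
\]
which gives $\liminf_{y\to x}Mf(y)\geq Mf(x)-2\epsilon$. Letting $\epsilon\to 0$ completes the proof.

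There is no serious obstacle here: this is the classical fact that the pointwise supremum of a family of continuous functions is lower semi-continuous, applied to the family $\{A_r\}_{r>0}$. The only mild care point is to note that $f$ is bounded (or merely locally integrable, which suffices) so that $A_r$ is well-defined and continuous in $y$.
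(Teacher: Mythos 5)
Your proof is correct and takes the same route as the paper: the paper also writes $Mf$ as the pointwise supremum of the continuous averages $x \mapsto \fint_{x-r}^{x+r} f(y) \intd y$ over $r>0$ and invokes lower semi-continuity of such suprema, merely without spelling out the Lipschitz bound and the $\epsilon$\--$\delta$ argument that you make explicit. Your added details (boundedness of $f$ from bounded variation, the bound $|A_r(y)-A_r(y')| \leq \|f\|_\infty |y-y'|/r$) are accurate and harmless elaborations of the same idea.
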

\begin{proof}
By definition, $Mf$ is the pointwise supremum of the continuous functions
\[
\R \ni x \mapsto \fint_{x-r}^{x+r} f(y) \intd y, \quad r > 0.
\]
The lemma follows from this.
\end{proof}

We now show that the canonical representative does not increase the variation.

\begin{lemma}\label{l:varbarf}%
Inequality \cref{e:varfbarf} holds.
\end{lemma}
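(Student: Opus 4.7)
My plan is to first establish the \emph{sandwich} bound
\[
\min(f^-(x), f^+(x)) \leq \bar f(x) \leq \max(f^-(x), f^+(x))
\]
at every \(x \in \R\), where \(f^-(x)\) and \(f^+(x)\) denote the left and right limits of \(f\), which exist everywhere because \(f\) is of bounded variation. From this sandwich I will deduce \cref{e:varfbarf} by a point-insertion argument along any partition. The lower inequality is straightforward: since the one-sided limits exist, \(\lim_{r \searrow 0}\fint_{x-r}^{x+r}f = \tfrac12(f^-(x)+f^+(x))\). If \(\bar f(x)=0\), this limit vanishes and forces \(f^-(x)=f^+(x)=0=\bar f(x)\); otherwise \(\bar f(x)=Mf(x)\geq\tfrac12(f^-(x)+f^+(x))\geq\min(f^\pm(x))\).

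The upper inequality is where the hypothesis on \(f\) enters, and is the main obstacle. Suppose \(\bar f(x)=Mf(x)>0\). Then by the definition of \(\bar f\), \(\limsup_{r\searrow 0}\fint_{x-r}^{x+r}f>0\), so \(\{f>0\}\) has positive measure in every neighbourhood of \(x\). For almost every such point the hypothesis gives \(f=Mf\), while \cref{l:semicont} guarantees \(Mf(y)\geq Mf(x)-\delta\) for \(y\) close enough to \(x\). Combining these two ingredients, I find a sequence \(y_n\to x\) with \(y_n\neq x\) and \(f(y_n)\geq Mf(x)-\delta\). Since \(f\) has one-sided limits, \(\limsup_{y\to x,\,y\neq x}f(y)=\max(f^-(x),f^+(x))\), and letting \(\delta\to 0\) yields \(\bar f(x)\leq\max(f^-(x),f^+(x))\). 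Neither ingredient alone suffices: without the structural hypothesis one can easily produce nonnegative BV functions with \(Mf(x)\gg\max(f^\pm(x))\) due to mass located far from \(x\), and without lower semicontinuity there would be no mechanism to transport large values of \(Mf(x)\) to nearby points of the support.

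Given the sandwich, fix a strictly increasing partition \(x_0<x_1<\ldots<x_N\) and \(\epsilon>0\). For each \(i\) I pick \(y_i^-<x_i<y_i^+\) so close to \(x_i\) that \(|f(y_i^\pm)-f^\pm(x_i)|<\epsilon/N\), shrinking the chosen neighbourhoods if needed so that \(y_i^+<y_{i+1}^-\); this is possible because \(f^\pm\) are honest one-sided limits. Since \(\bar f(x_i)\) lies between \(f^-(x_i)\) and \(f^+(x_i)\), the triangle equality
\[
|\bar f(x_i)-f(y_i^-)|+|\bar f(x_i)-f(y_i^+)|\leq |f(y_i^-)-f(y_i^+)|+O(\epsilon/N)
\]
holds for interior \(i\), with an analogous bound at the boundary indices \(i=0,N\). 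Combining this with the triangle inequality
\[
|\bar f(x_i)-\bar f(x_{i+1})|\leq|\bar f(x_i)-f(y_i^+)|+|f(y_i^+)-f(y_{i+1}^-)|+|f(y_{i+1}^-)-\bar f(x_{i+1})|
\]
and summing over \(i\) gives
\[
\sum_{i=0}^{N-1}|\bar f(x_i)-\bar f(x_{i+1})|\leq \sum_{i=0}^{N}|f(y_i^-)-f(y_i^+)|+\sum_{i=0}^{N-1}|f(y_i^+)-f(y_{i+1}^-)|+O(\epsilon)\leq \var(f)+O(\epsilon),
\]
since the first two sums constitute the variation of \(f\) along the augmented partition \(y_0^-<y_0^+<\ldots<y_N^-<y_N^+\). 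Letting \(\epsilon\to 0\) and taking the supremum over partitions yields \cref{e:varfbarf}.
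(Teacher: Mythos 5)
Your argument is correct, and it takes a genuinely different route from the paper's, though the analytic heart coincides. Your proof of the upper half of the sandwich, $\bar f(x)\leq\max(f^-(x),f^+(x))$, combines exactly the three ingredients of the paper's construction of $y_2$: failure of \cref{eq:pointaverage} gives $\{f>0\}$ positive measure in every neighbourhood of $x$, the hypothesis gives $f=Mf$ at almost every such point, and \cref{l:semicont} transports the value $Mf(x)$ to those nearby points; your lower half sharpens the paper's one-sided estimate $\bar f(x)\geq\limsup_{r\searrow0}\fint_{x-r}^{x+r}f(y)\intd y$ into the exact limit $\frac12(f^-(x)+f^+(x))$. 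Where you diverge is the packaging and bookkeeping. You exploit the existence of one-sided limits everywhere (available since $f$ is of bounded variation; the paper remarks that such a reformulation is possible but deliberately avoids it) to obtain an \emph{exact} pointwise sandwich, after which the passage to \cref{e:varfbarf} is purely local: inserting $y_i^-<x_i<y_i^+$ and invoking the betweenness identity $|c-a|+|c-b|=|a-b|$ for $c$ between $a$ and $b$ makes the sum collapse onto the variation of $f$ along the augmented partition. The paper instead proves only the $\epsilon$-approximate two-sided statement (nearby $y_1,y_2$ with $f(y_1)-\epsilon\leq\bar f(x)\leq f(y_2)+\epsilon$) and compensates combinatorially, first pruning the partition to an alternating subsequence by convex-hull removal so that at each surviving node only a one-sided approximation with the correct sign is required. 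Your version buys a cleaner, sign-free reduction at the price of leaning harder on BV regularity; the paper's pointwise input is weaker, with the pruning step doing the work that your exact betweenness does. The two delicate points in your write-up are both handled: $y_n\neq x$ can indeed be arranged since a set of positive measure minus a point retains positive measure, and the $N+1$ errors of size $O(\epsilon/N)$ accumulate only to $O(\epsilon)$.
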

\begin{proof}
We first claim that it suffices to show that for any $x \in \R$ and $\epsilon > 0$ there exist $y_1,y_2 \in (x-\epsilon,x+\epsilon)$ such that
$
f(y_1) - \epsilon \leq \bar f(x) \leq f(y_2) + \epsilon
$.
Let $k \geq 1$ and let
\[
-\infty < x_0 < x_1 < \ldots < x_k < \infty
.
\]
By iteratively removing any points $x_i$ with $1 \leq i \leq k-1$ for which $\bar{f}(x_i)$ lies in the convex hull of $\{\bar{f}(x_{i-1}), \bar{f}(x_{i+1})\}$,
we obtain a subsequence \(x_0'<\ldots<x_\ell'\) such that
\[
\sum_{i=0}^{k-1} |\bar f(x_i) - \bar f(x_{i+1})|
=
\sigma
\sum_{i=0}^{\ell-1} (-1)^i \bar f(x'_i) + (-1)^{i+1} \bar f(x'_{i+1})
\]
for some $\sigma\in\{-1,1\}$.
Let $\epsilon > 0$.
By assumption, there exist points $y_i \in (x'_i-\epsilon,x'_i+\epsilon)$ such that
\[
\sigma (-1)^i \bar f(x'_i) \leq \sigma (-1)^i f(y_i) + \epsilon
\]
for any $0 \leq i \leq \ell$.
If $\epsilon$ is small enough, then $y_i$ is increasing in $i$ and hence
\[
\sum_{i=0}^{k-1} |\bar f(x_i) - \bar f(x_{i+1})|
- 2\ell\epsilon
\leq
\sigma
\sum_{i=0}^{\ell-1} (-1)^i f(y_i) + (-1)^{i+1} f(y_{i+1})
\leq
\var(f).
\]
Let $\epsilon \to 0$ and then take the supremum over all $k$ and $x_i$ as above to show \cref{e:varfbarf}.

It remains to show that for any \(x\in\R\) and $\epsilon > 0$ there exist points $y_1$ and $y_2$ as above.
Let $r \in (0,\epsilon)$.
We start with the existence of $y_1$.
By the definitions of $\bar f(x)$ and $Mf(x)$, \[\bar f(x) \geq \limsup_{r \searrow 0} \fint_{x-r}^{x+r} f(y) \intd y.\]
Hence if $r$ is sufficiently small, then the integral on the right\-/hand side is at most $\bar f(x) + \epsilon$ and so there exists a $y_1 \in (x-r,x+r)$ with $f(y_1) - \epsilon \leq \bar f(x)$, as required.

We complete the proof by showing the existence of $y_2$.
If $\bar f(x) = 0$, then we may simply choose $y_2=x$ because $f$ is nonnegative.
So we assume that $\bar f(x) = Mf(x) > 0$.
Since \cref{eq:pointaverage} fails, $f(y) > 0$ for any $y$ in some subset of positive measure of $(x-r,x+r)$.
As $f$ and $\bar f$ are equal almost everywhere, it follows that $f(y_2) = \bar f(y_2) = Mf(y_2)$ for some $y_2 \in (x-r,x+r)$.
Hence if $r$ is small enough, then \cref{l:semicont} implies that $f(y_2)+\epsilon \geq \bar f(x)$, as required.
\end{proof}

In particular, \cref{e:varfbarf} shows that $\bar f$ is of bounded variation.
Together with the definition of $\bar f$, this implies some topological properties of the vanishing set
\[V=\{x \in \R \mid \bar f(x)=0\}.\]

\begin{lemma}%
\label{l:onesided0}
The set $V$ is open and its boundary has no limit points in $\R$.
\end{lemma}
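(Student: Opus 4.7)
The plan is to exploit a \emph{gap property} of $\bar f$: on any compact set, $\bar f$ takes values either zero or bounded away from zero by a positive constant. I first set up this property and a useful reformulation of $V$ in terms of one-sided limits, then prove openness by contradiction and derive the boundary statement from the bounded variation of $\bar f$.

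Since $f$ has bounded variation, the one-sided limits $f(y\pm) = \lim_{z \to y\pm} f(z)$ exist at every point and $\lim_{r \searrow 0} \fint_{y-r}^{y+r} f = (f(y+) + f(y-))/2$. With $f \geq 0$, this gives the characterisation $y \in V \iff f(y+) = f(y-) = 0$. If $f$ vanishes almost everywhere then $V = \R$ and there is nothing to prove, so I assume $Mf > 0$ on $\R$. For any compact $K \subset \R$, \cref{l:semicont} shows that the positive lower semi-continuous function $Mf$ attains a positive infimum $c_K > 0$ on $K$, and since $\bar f(x) \in \{0, Mf(x)\}$ for every $x$, this yields $\bar f(K) \subset \{0\} \cup [c_K, \infty)$, which is closed.

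To prove that $V$ is open, I fix $y \in V$ and a compact neighbourhood $K$ of $y$ with constant $c_K$. By \cref{l:varbarf}, $\bar f$ has bounded variation, so the one-sided limit $\bar f(y+)$ exists; closedness of $\{0\}\cup[c_K,\infty)$ forces it to lie in this set. I argue $\bar f(y+) = 0$ by contradiction: otherwise $\bar f \geq c_K$ on some $(y, y+\delta)$, and then the hypothesis gives $f(x) \in \{0\} \cup [c_K, \infty)$ for almost every $x \in (y, y+\delta)$. But $f(y+) = 0$ together with the existence of the one-sided limit gives $f < c_K$ pointwise on a possibly smaller $(y, y+\delta')$, and combining these forces $f = 0$ almost everywhere on $(y, y + \min(\delta, \delta'))$. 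Computing $\bar f$ directly from its definition via small averages of $f$ then yields $\bar f = 0$ on a right-neighbourhood of $y$, contradicting $\bar f \geq c_K$ there. The argument on the left side is symmetric, so $\bar f = 0$ in a full neighbourhood of $y$.

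For the boundary statement, openness of $V$ gives $\partial V \subset V^c$, so at each $y \in \partial V$ and a compact neighbourhood $K$ of $y$ with constant $c_K$ one has $\bar f(y) \geq c_K$, while $\bar f(y+) = 0$ or $\bar f(y-) = 0$ depending on which side $V$ approaches from. Thus $\bar f$ has a jump of size at least $c_K$ at each boundary point in $K$, and bounded variation of $\bar f$ then allows only finitely many such points in $K$, which is what is needed. The main delicacy will be in the contradiction step of the openness argument, where the hypothesis $f\in\{0,Mf\}$ almost everywhere must be carefully combined with the pointwise one-sided limit $f(y+) = 0$ and the gap property in order to upgrade a pointwise limit statement into an almost everywhere vanishing statement.
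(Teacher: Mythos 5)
Your proposal is correct, but it reverses the paper's logical order and runs on different machinery, so a comparison is worthwhile. The paper proves the two claims in the opposite order: it first shows that $\partial V$ has no limit points by a direct counting argument --- near any point, \cref{l:semicont} gives $Mf>\epsilon$ on a neighbourhood, each alternation between a point of $V$ (where $\bar f=0$) and a point of $\R\setminus V$ (where $\bar f=Mf>\epsilon$) costs at least $\epsilon$ of variation, and $\var(\bar f)\leq\var(f)<\infty$ bounds the number of alternations --- and only then deduces openness, because a boundary point must have a one-sided neighbourhood on which $f>\epsilon$ almost everywhere, so that \cref{eq:pointaverage} fails at it. You instead prove openness first, via the reformulation $x\in V\iff f(x+)=f(x-)=0$ (in the nondegenerate case $Mf>0$), which is precisely the integral-free rewriting of \cref{eq:pointaverage} that the paper mentions after defining $\bar f$ and explicitly declares it will not need; you combine this with your gap property, where $c_K>0$ exists because the lower semicontinuous function $Mf$ attains its infimum on a compact set (a tidy globalisation of the paper's local $\epsilon$). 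You then get discreteness of $\partial V$ by jump counting: each boundary point carries a jump of $\bar f$ of size at least $c_K$, and a function of bounded variation admits only finitely many such jumps in a compact set --- morally the same variation count as the paper's alternation argument, but run through one-sided limits. Your route needs more BV theory (one-sided limits exist everywhere; symmetric averages converge to the mean of the one-sided limits), whereas the paper's argument is more elementary; in exchange, your openness proof avoids the paper's slightly delicate one-sided case analysis. Two steps you leave implicit are fine but each deserves a line: passing from $\bar f(y+)=\bar f(y-)=0$ to $\bar f=0$ on punctured one-sided neighbourhoods invokes the gap property once more (values in $\{0\}\cup[c_K,\infty)$ tending to $0$ are eventually $0$), and the finiteness of large jumps follows by interlacing each boundary point $y_j$ with a nearby point $z_j$ on the side where the one-sided limit vanishes, giving $\var(\bar f)\geq\sum_j|\bar f(y_j)-\bar f(z_j)|\geq mc_K/2$ for $m$ such points. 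Neither is a gap.
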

This can be stated equivalently as follows:
There exists a finite or countably infinite nondecreasing sequence of points \(a_i \in \R\cup\{\pm\infty\}\) without accumulation points in \(\R\)
such that \(V=\bigcup_i(a_{2i},a_{2i+1})\).
\begin{proof}
If $f$ vanishes almost everywhere, then $V = \R$ and the lemma follows.
Since $f$ is nonnegative, we may therefore assume that $f$ is positive in a set of positive measure.
Let \(x\in\R\).
Then $Mf(x) > 0$ and by \cref{l:semicont} there exists an \(\epsilon>0\) such that \(Mf(y)>\epsilon\) for any \(y\in(x-\epsilon,x+\epsilon)\).

We first show that $x$ is not a limit point of the boundary of $V$.
Let $k \geq 0$ and let
\[x-\epsilon<x_0<x_1<\ldots<x_{2k+1}<x+\epsilon\] be a sequence of points with \(\bar{f}(x_{2i})=0\) and \(\bar{f}(x_{2i+1})=M\bar{f}(x_{2i+1})\) for any $0 \leq i \leq k$.
It suffices to show that $k$ is bounded by a constant that only depends on $\bar{f}$ and $\epsilon$.
Such a bound holds because
\[
(2k+1)\epsilon
<
\sum_{i=0}^{2k}|\bar{f}(x_{i+1})-\bar{f}(x_i)|
\leq
\var(\bar{f})
<\infty
.
\]
The first inequality above holds by the properties of $\epsilon$ and $x_k$.
The second inequality holds by definition.
Hence $x$ is not a limit point of the boundary of $V$.

It remains to show that $V$ is open.
To this end, let $x$ be a boundary point of $V$.
We need to show that $\bar f(x) > 0$.
By the first part of the proof, $f(y) > \epsilon$ for any $y$ in some one\-/sided neighbourhood of $x$, i.e.\ for any $y$ in $(x-r,x)$ or $(x,x+r)$ for some $r>0$.
Since $f$ is nonnegative, we see that \cref{eq:pointaverage} fails and hence $\bar f(x) = Mf(x) > 0$.
This completes the proof.
\end{proof}

\subsection{Global variation bound}\label{s:appl_abcd}

In \cref{s:repr}, we proved \cref{e:varfbarf}.
Together with the following result, this implies \cref{e:var1}, proving the first part of \cref{t}.

\begin{proposition}%
\label{l:intermediate}%
Inequality \cref{e:var1bar} holds.
\end{proposition}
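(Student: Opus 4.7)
The plan is to fix an arbitrary partition $x_0<x_1<\ldots<x_k$ and to bound $\sum_{i=0}^{k-1}|Mf(x_i)-Mf(x_{i+1})|$ by $\var(\bar f)$ through a careful enhancement of the partition followed by a block-wise comparison. We may assume that $f$ does not vanish almost everywhere, since otherwise $Mf\equiv 0$ and the statement is trivial. By \cref{l:onesided0}, the vanishing set $V=\{\bar f=0\}$ decomposes as $V=\bigsqcup_i(\alpha_i,\beta_i)$ with boundary points admitting no accumulation in $\R$, so only finitely many of them lie in any bounded interval.

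I enhance the given partition by adding, first, every boundary point $\alpha_i,\beta_i$ lying in $[x_0,x_k]$ and, second, the midpoint of every bounded component whose closure sits inside $[x_0,x_k]$. If $x_0$ or $x_k$ itself happens to lie in a bounded $V$-component $(\alpha,\beta)$, I prepend $\alpha$ or append $\beta$ (together with the corresponding midpoint) to the partition as well. By the triangle inequality this enhancement can only increase the variation sum, so it suffices to bound the enhanced sum.

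Call the enhanced partition $z_0<\ldots<z_N$. The inclusion of every $V$-boundary in the range forces each open interval $(z_j,z_{j+1})$ to lie either entirely in $F=\R\setminus V$ or entirely in a single component $(\alpha_i,\beta_i)$ of $V$. For pairs of the first type, both endpoints are in $F$, where $\bar f=Mf$, so their contribution equals $|\bar f(z_j)-\bar f(z_{j+1})|$. For pairs of the second type, I group all consecutive ones falling inside the same closure $[\alpha_i,\beta_i]$: for a bounded component, part~\cref{i:abcd_bounded} of \cref{l:abcd} bounds their combined contribution by $Mf(\alpha_i)+Mf(\beta_i)=\bar f(\alpha_i)+\bar f(\beta_i)$, while for an unbounded component part~\cref{i:abcd_unbounded} (equivalently \cref{l:mong}) bounds it by $\bar f$ at the unique finite endpoint.

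To conclude, I compare with the same partition applied to $\bar f$. Since $\bar f\equiv 0$ throughout each open component $(\alpha_i,\beta_i)$, the enforced interior midpoint makes the $\bar f$-sum on each bounded $V$-block equal exactly $\bar f(\alpha_i)+\bar f(\beta_i)$, matching the upper bound above; the analogous equality is immediate for unbounded components. Adding the $F$-pair contributions then yields $\sum_j|Mf(z_j)-Mf(z_{j+1})|\leq\sum_j|\bar f(z_j)-\bar f(z_{j+1})|\leq\var(\bar f)$, as required. The delicate part of the argument is the bookkeeping when $x_0$ or $x_k$ lies inside a $V$-component: in the bounded case it is handled by extending the partition outward so that the \cref{l:abcd} upper bound is matched exactly by the corresponding $\bar f$-sum, and in the unbounded case by invoking \cref{l:mong} directly, since no outward extension is available.
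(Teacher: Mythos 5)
Your proposal is correct and takes essentially the same route as the paper: it uses the structure of $V$ from \cref{l:onesided0} to split the line into components of $V$ and of $\R\setminus V$, notes that $Mf=\bar f$ on the latter, and bounds the contribution of each $V$-component by \cref{l:abcd}\cref{i:abcd_bounded} in the bounded case and by \cref{l:mong} (i.e.\ \cref{l:abcd}\cref{i:abcd_unbounded}) in the unbounded case, which is exactly the paper's argument. The only difference is presentational: the paper invokes additivity of the variation over this locally finite subdivision in one line, whereas you unpack it at the level of refined partitions, with the midpoint insertion making explicit the identity $\var_{[\alpha,\beta]}(\bar f)=\bar f(\alpha)+\bar f(\beta)$ that the paper uses implicitly.
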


\begin{proof}
By \cref{l:onesided0} and a subdivision of $\R$ we see that \cref{e:var1bar} holds if
\begin{equation}\label{e:varIbar}
\var_I(Mf) \leq \var_I(\bar f)
\end{equation}
whenever \(I\) is a connected component of $\R \setminus V$ or the closure of a connected component of $V$.
If \(I\) is a connected component of \(\R\setminus V\), then \(\bar f\) and \(Mf\) agree on \(I\),
so that \cref{e:varIbar} holds with equality.
Now let \(I\) be the closure of a connected component of \(V\).
If \(I=\R\), then both sides of \cref{e:varIbar} are zero.
On the other hand, if \(I\neq\R\),
then by \cref{l:onesided0}, $\bar f$ and $Mf$ agree on the boundary of $I$
and therefore \cref{e:varIbar} follows from either \cref{i:abcd_bounded} or \cref{i:abcd_unbounded} in \cref{l:abcd}.
This completes the proof.
\end{proof}

\subsection{Cases of equality}\label{s:equality}

It remains to characterise the cases of equality in \cref{e:var1}.
We first establish certain regularity properties of \(\bar f\).

\begin{lemma}
\label{l:onesided0positive}
Any connected component of $V$ or $\R \setminus V$ has positive length.
\end{lemma}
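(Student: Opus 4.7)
The plan is to handle the two cases separately, relying on \cref{l:onesided0} for $V$ and on the definition of $\bar f$ together with the almost-everywhere equality $f=\bar f$ for $\R\setminus V$.

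For connected components of $V$, the argument is immediate: \cref{l:onesided0} says that $V$ is open, and connected components of an open subset of $\R$ are (nonempty) open intervals, hence have positive length. I would state this in a single sentence.

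The substantive part is to show that $\R\setminus V$ contains no isolated points, which is equivalent to the statement about its connected components. I would argue by contradiction: suppose $x\in\R\setminus V$ is isolated, so that there exists $\epsilon>0$ with $(x-\epsilon,x+\epsilon)\cap(\R\setminus V)=\{x\}$. Then $\bar f$ vanishes on $(x-\epsilon,x+\epsilon)\setminus\{x\}$. Since $f=\bar f$ almost everywhere (as established in \cref{s:repr}), it follows that $f=0$ almost everywhere on $(x-\epsilon,x+\epsilon)$. Hence for every $0<r<\epsilon$,
\[
\fint_{x-r}^{x+r} f(y)\intd y=0,
\]
so \cref{eq:pointaverage} is satisfied at $x$. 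By the definition of $\bar f$ this forces $\bar f(x)=0$, i.e.\ $x\in V$, contradicting $x\in\R\setminus V$.

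There is essentially no obstacle here; the proof is a direct unpacking of the definition of $\bar f$ combined with the openness statement from \cref{l:onesided0}. The only thing one must be careful about is not to confuse the pointwise value $\bar f(x)$ (which can be nonzero even if $f$ vanishes in a punctured neighbourhood of $x$ in principle) with the averaged condition \cref{eq:pointaverage}; but since $f$ and $\bar f$ agree almost everywhere, punctured-neighbourhood vanishing of $\bar f$ transfers to almost-everywhere vanishing of $f$ on a full neighbourhood, which is exactly what makes \cref{eq:pointaverage} hold.
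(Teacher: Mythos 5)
Your proof is correct and takes essentially the same approach as the paper: the substantive case of $\R\setminus V$ is argued exactly as in the paper's proof (vanishing of $\bar f$ on a punctured neighbourhood plus $f=\bar f$ almost everywhere yields \cref{eq:pointaverage}, hence $\bar f(x)=0$, a contradiction), while for $V$ you simply cite openness from \cref{l:onesided0}, where the paper instead reruns a direct semicontinuity argument. One caveat, which the paper's proof shares implicitly: for a general closed set a singleton connected component need not be an isolated point (think of the Cantor set), so your claimed equivalence between \enquote{no isolated points} and \enquote{no singleton components} of $\R\setminus V$ additionally uses that the boundary of $V$ has no limit points, which is the other half of \cref{l:onesided0}.
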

\begin{proof}
Let \(x\in\R\).
If $\bar f(y) > 0$ for any $y \neq x$ in some compact neighbourhood of $x$, then by \cref{l:semicont} there exists an $\epsilon > 0$ such that $\bar f(y) = Mf(y) > \epsilon$ for any such $y$.
Hence \cref{eq:pointaverage} fails and $\bar f(x) = Mf(x) > \epsilon$.
This shows that $\{x\}$ is not a connected component of $V$.

On the other hand, if $\bar f(y) = 0$ for any $y \neq x$ in some neighbourhood of $x$, then \cref{eq:pointaverage} holds and hence $\bar f(x) = 0$.
This shows that $\{x\}$ is not a connected component of $\R \setminus V$.
Since $x \in \R$ was arbitrary, it follows that any connected component of $V$ or $\R \setminus V$ has positive length.
\end{proof}

Now we investigate the behaviour of the canonical representative \(\bar f\) on connected components of its support \[\R \setminus V = \{x \in \R \mid \bar f(x) > 0\}.\]
This set is closed by \cref{l:onesided0}.
Our next result will only be applied in the case of an unbounded connected component, but its proof is identical in the bounded and unbounded cases.

\begin{lemma}
\label{l:concave}
The function $\bar f$ is concave on any connected component of $\R \setminus V$.
\end{lemma}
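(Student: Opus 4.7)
The plan is as follows. Fix a connected component $J$ of $\R \setminus V$; by \cref{l:onesided0,l:onesided0positive} it is an interval of positive length, and on $J$ the definition of $\bar f$ gives $\bar f = Mf$. A first observation is that since $\bar f = f$ almost everywhere and $\bar f > 0$ on $J$, the hypothesis of \cref{t} forces
\[
f(x) = Mf(x) = \bar f(x) \quad \text{for almost every } x \in J.
\]

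The crucial step will be a \emph{super\-/mean\-/value inequality} for $\bar f$ on $J$: for any $x$ in the interior of $J$ and $r > 0$ with $[x-r,x+r] \subseteq J$,
\[
\bar f(x) = Mf(x) \geq \fint_{x-r}^{x+r} f(y) \intd y = \fint_{x-r}^{x+r} \bar f(y) \intd y,
\]
where the inequality is the definition of $Mf$ and the last equality uses the almost\-/everywhere identity $f = \bar f$ on $J$ just established.

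From this super\-/mean\-/value inequality and the lower semi\-/continuity of $Mf$ (\cref{l:semicont}) I would deduce concavity of $\bar f$ on the interior of $J$. Let $[c,d]$ be any compact subinterval of the interior and let $\ell$ be the affine function matching $\bar f$ at $c$ and $d$; it suffices to prove $g := \bar f - \ell \geq 0$ on $[c,d]$. Since $g$ is lower semi\-/continuous on the compact set $[c,d]$ it attains its minimum $m \leq g(c) = 0$. Suppose towards a contradiction that $m < 0$, so $m$ is attained at some $x_0 \in (c,d)$. For small enough $r$ the super\-/mean\-/value inequality at $x_0$ gives $g(x_0) \geq \fint_{x_0-r}^{x_0+r} g(y) \intd y$; together with $g \geq m$ pointwise, this forces $g = m$ almost everywhere on $[x_0-r,x_0+r]$, and the lower semi\-/continuity of $g$ upgrades this to $g \equiv m$ on the open interval $(x_0-r,x_0+r)$. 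The set $\{g = m\} \cap (c,d)$ is therefore both open and closed in $(c,d)$ as well as nonempty, so by connectedness it equals $(c,d)$; lower semi\-/continuity of $g$ at $c$ then gives $g(c) \leq m < 0$, contradicting $g(c) = 0$. Hence $m = 0$ and $g \geq 0$ on $[c,d]$, which is concavity of $\bar f$ on $[c,d]$. As $[c,d]$ was arbitrary, $\bar f$ is concave on the whole interior of $J$.

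Finally, I would pass from the interior of $J$ to $J$ itself. At any finite endpoint $a \in J$, concavity on the interior makes $\bar f$ continuous there and guarantees a one\-/sided limit at $a$, while lower semi\-/continuity of $Mf$ at $a$ gives $\bar f(a) \leq \lim_{x \to a^+} \bar f(x)$. Applying interior concavity at the points $a+\delta$ and some fixed $x_2 \in J$ and letting $\delta \to 0$ then yields the concavity inequality between the endpoint $a$ and $x_2$; the right endpoint is handled symmetrically, and unbounded components need no endpoint extension. The main obstacle is the interior concavity deduction, where the lack of continuity of $\bar f$ necessitates the careful lower\-/semicontinuous variant of the classical ``no interior minimizer of $f-\ell$'' argument rather than a direct continuity-based proof.
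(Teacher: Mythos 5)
Your proof is correct and takes essentially the same approach as the paper: both rest on the super\-/mean\-/value inequality $Mf(x) \geq \fint_{x-r}^{x+r} \bar f(y) \intd y$ (via $f = \bar f$ almost everywhere) combined with the lower semi\-/continuity of $Mf$ from \cref{l:semicont}, applied to $g = \bar f - L$ in a minimum\-/principle argument. The paper merely extracts the contradiction more directly by choosing the \emph{smallest} minimiser of $g$, at which the average is strictly larger than the minimum, contradicting $\bar f = Mf$ there; this makes your clopen\-/propagation and separate endpoint\-/extension steps unnecessary, since that argument applies verbatim to triples $x_0 < x_1 < x_2$ that include the endpoints of the component.
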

\begin{proof}
Suppose for a contradiction that \(\bar f\) is not concave on some connected component \(I\) of $\R \setminus V$.
Then there exist points \(x_0 < x_1 < x_2\) in $I$ such that $\bar f(x_1) < L(x_1)$ where $L \colon \R \to \R$ is the affine linear function defined by $L(x_0)=\bar f(x_0)$ and $L(x_2)=\bar f(x_2)$.
Hence for $g = \bar f - L$ we have $g(x_1) < 0$ and $g(x_0)=g(x_2)=0$.

Since $\bar{f}$ and $Mf$ are equal in $I$, \cref{l:semicont} and the continuity of $L$ imply that there exists a smallest $x_1' \in [x_0,x_2]$ such that
\[
g(x_1') = \inf_{x_0 \leq y \leq x_2} g(y) < 0.
\]
Since $g(x_0) = 0$, there exists an $r > 0$ such that $[x_1'-r,x_1'+r] \subseteq [x_0,x_2]$.
We have that $g(y) \geq g(x_1')$ for any $y \in [x_0,x_2]$ and the inequality is strict if $y < x_1'$.
Hence by the mean value property for $L$,
\begin{align*}
Mf(x_1')
\geq
\fint_{x_1'-r}^{x_1'+r} \bar{f}(y) \intd y
=
\fint_{x_1'-r}^{x_1'+r} g(y) \intd y
+ L(x_1')
>
g(x_1') + L(x_1')
=
\bar{f}(x_1').
\end{align*}
This is a contradiction to the fact that $x_1' \in \R \setminus V$.
Therefore \(\bar{f}\) is concave on \(I\).
\end{proof}

The following result is a consequence of \cref{l:concave}.

\begin{lemma}
\label{l:concave2}
Let $I$ be an unbounded connected component of $\R \setminus V$.
Then, \[\lim_{|x| \to \infty;\, x\in I} \bar f(x) > 0.\]
Furthermore, if $I=\R$, then $\bar f$ is constant.
\end{lemma}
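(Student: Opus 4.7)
The plan is to combine the concavity of $\bar f$ on $I$ supplied by \cref{l:concave} with the boundedness of $\bar f$, which follows from $\var(\bar f)\leq\var(f)<\infty$ (\cref{l:varbarf}); indeed, any function of finite variation on $\R$ is bounded. Boundedness together with concavity on an unbounded interval will rigidly constrain the behaviour of $\bar f$ at infinity.

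For the first claim, by reflecting if necessary it suffices to handle the case $I=[a,\infty)$. A bounded concave function on a half-line is eventually monotone (its one-sided derivative is nonincreasing), so the limit $L:=\lim_{x\to\infty}\bar f(x)$ exists and lies in $[0,\infty)$. To rule out $L=0$, I would fix any $x_0\in I$; since $I\subseteq\R\setminus V$, we have $\bar f(x_0)>0$. Applying concavity at the midpoint gives
\[
\bar f\!\left(\tfrac{x_0+x}{2}\right)\geq\tfrac12\bar f(x_0)+\tfrac12\bar f(x)
\]
for every $x>x_0$ in $I$. Sending $x\to\infty$ also sends the midpoint $(x_0+x)/2$ to infinity, so the left-hand side tends to $L$ while the right-hand side tends to $\bar f(x_0)/2>0$. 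Hence $L\geq \bar f(x_0)/2>0$, a contradiction to $L=0$.

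For the second claim, suppose $I=\R$. If $\bar f$ were nonconstant, there would exist $x_0<x_1$ with $\bar f(x_0)\neq\bar f(x_1)$; after a reflection we may assume $\bar f(x_0)>\bar f(x_1)$. Concavity of $\bar f$ on all of $\R$ then forces
\[
\bar f(x)\leq \bar f(x_1)+\frac{\bar f(x_1)-\bar f(x_0)}{x_1-x_0}(x-x_1)
\]
for every $x>x_1$, and the slope on the right is strictly negative, so the right-hand side tends to $-\infty$ as $x\to\infty$. This contradicts the boundedness of $\bar f$, so $\bar f$ must be constant.

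I do not foresee a serious obstacle: the whole argument reduces to elementary properties of bounded concave functions, with the midpoint inequality being the key trick in the first part and the chord-extension inequality the key trick in the second. The only subtlety is ensuring that the relevant points (the midpoints, and the extension to $x>x_1$) stay in $I$, which is automatic because $I$ is a half-line or all of $\R$.
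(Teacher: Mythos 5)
Your proposal is correct and follows essentially the same route as the paper: both rest on the concavity supplied by \cref{l:concave}, and your chord-extension argument for the case $I=\R$ is exactly the paper's proof, which handles both claims at once by noting that any strict decrease of $\bar f$ on an unbounded component forces $\bar f(x)<0$ for large $|x|$, contradicting nonnegativity. The only cosmetic difference is that for the first claim you substitute a midpoint inequality plus boundedness of $\bar f$ (via \cref{l:varbarf}) where the paper reuses the same secant-line comparison with nonnegativity; both are valid.
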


\begin{proof}
Suppose for a contradiction that one of the conclusions of the lemma is false.
Let $x_0 \in I$, meaning that $\bar f(x_0) > 0$.
Then by symmetry, we may assume that $[x_0,\infty) \subseteq I$ and that
there exists a $x_1 > x_0$ such that $\bar f(x_1) < \bar f(x_0)$.
By \cref{l:concave}, it follows that $\bar f(x_2) \leq L(x_2)$ for any $x_2 \geq x_1$ where $L \colon \R \to \R$ is the affine linear function defined by $L(x_0) = \bar f(x_0)$ and $L(x_1) = \bar f(x_1)$.
Notice that $L$ is strictly decreasing and hence $\bar f(x_2)<0$ if $x_2$ is large enough.
This is a contradiction to the nonnegativity of $\bar f$.
\end{proof}

We can now characterise the cases of equality in the intermediate inequality \cref{e:var1bar}.

\begin{proposition}%
\label{l:intermediateequality}%
Equality holds in \cref{e:var1bar} if and only if $\bar f$ is constant or $\R\setminus V$ is a compact interval of positive length.
\end{proposition}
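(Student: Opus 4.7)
My plan is to exploit the same decomposition as in the proof of \cref{l:intermediate}: both sides of \cref{e:var1bar} split additively over the closures of the connected components of \(V\) and the components of \(\R\setminus V\). On components of \(\R\setminus V\) we have \(\bar f=Mf\) and hence equality of variations is automatic, while on closures of components of \(V\) the only available bounds are the inequalities furnished by \cref{l:abcd}. Thus equality in \cref{e:var1bar} is equivalent to equality in each of those bounds.

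For the forward direction, suppose equality holds. If \(f=0\) almost everywhere then \(\bar f\equiv0\) is constant and we are done, so I assume otherwise. The strict form of \cref{l:abcd}\cref{i:abcd_bounded} then rules out every bounded component of \(V\): any such component would contribute a strictly positive loss to the sum. Hence \(V\) is a disjoint union of at most two unbounded open intervals and must be one of \(\emptyset\), \((-\infty,a)\), \((b,\infty)\), \((-\infty,a)\cup(b,\infty)\), or \(\R\); the last is excluded by our standing assumption. When \(V=\emptyset\), \cref{l:concave2} applied with \(\R\setminus V=\R\) forces \(\bar f\) to be constant; when \(V=(-\infty,a)\cup(b,\infty)\), \cref{l:onesided0positive} ensures that \(\R\setminus V=[a,b]\) has positive length, which is the second alternative in the statement.

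It remains to rule out the one-component cases; take \(V=(b,\infty)\), the other case being symmetric. By \cref{l:concave,l:concave2}, \(\bar f\) is concave on \((-\infty,b]\) and admits a strictly positive limit \(L>0\) at \(-\infty\). Estimating \(Mf\) from below on \([b,\infty)\) by averaging \(f\) over \([x-r,x+r]\) with \(r\to\infty\), and using that \(f\geq L/2\) on a far left tail while \(f=0\) a.e.\ on \((b,\infty)\), yields \(\inf_{x\geq b}Mf(x)\geq L/4>0\). Hence \cref{l:abcd}\cref{i:abcd_unbounded} gives
\[
\var_{[b,\infty)}(Mf)=Mf(b)-\inf_{[b,\infty)}Mf\leq\bar f(b)-L/4<\bar f(b)=\var_{[b,\infty)}(\bar f),
\]
a strict inequality incompatible with the assumed equality.

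For the converse, \(\bar f\) constant yields \(Mf\equiv\bar f\) and both variations vanish; if instead \(\R\setminus V=[a,b]\) with \(a<b\), then \(\bar f\) is bounded and supported in \([a,b]\), so \(f\in L^1(\R)\) and the envelope \(Mf(x)\leq\|f\|_1/(2(x-b))\) for \(x>b\) (and its symmetric counterpart) gives \(Mf(x)\to0\) at infinity. Consequently \(\inf_{[b,\infty)}Mf=0\) and \cref{l:abcd}\cref{i:abcd_unbounded} yields equality of variations on each unbounded component of \(V\), while equality on \([a,b]\) holds because \(Mf=\bar f\) there. Summing the pieces proves \cref{e:var1bar} with equality. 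The main obstacle is the exclusion of the single-component case: \cref{l:abcd}\cref{i:abcd_unbounded} produces strictness only when \(Mf\) stays bounded away from zero, and converting the positivity of \(\bar f\) at infinity on the \emph{opposite} side of \(b\) into such a uniform lower bound on \(Mf\) is where the proof leaves the purely abstract piecewise framework and relies essentially on the concavity consequences of \cref{l:concave2}.
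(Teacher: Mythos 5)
Your proof is correct and follows essentially the same route as the paper: the same per-component decomposition from \cref{l:intermediate}, strictness of \cref{l:abcd}\cref{i:abcd_bounded} to exclude bounded components of \(V\), and \cref{l:concave2} together with \cref{l:abcd}\cref{i:abcd_unbounded} to settle the unbounded components. The only differences are cosmetic: you obtain the lower bound \(L/4\) by a direct tail-averaging estimate where the paper uses the symmetric limit \(\lim_{y\to\infty}(\bar f(y)+\bar f(-y))/2\), and you justify \(Mf(x)\to0\) via the explicit \(L^1\) decay bound that the paper merely asserts.
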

\begin{proof}
It suffices to consider the case that $\bar f$ is not constant since otherwise both sides of \cref{e:var1bar} are zero.
Then $\R \setminus V$ is nonempty.
By the second part of \cref{l:concave2}, we also have that $V$ is nonempty.

By the proof of \cref{l:intermediate},
equality holds in \cref{e:var1bar} if and only if \cref{e:varIbar}
holds with equality whenever \(I\) is the closure of some connected component of $V$.
Any such $I$ has positive length by \cref{l:onesided0positive}.
By the strictness in \cref{l:abcd}\cref{i:abcd_bounded}, this means that \cref{e:var1bar} can only hold with equality if all connected components of \(V\) are unbounded, i.e.\ if $\R \setminus V$ is a nonempty interval.
This interval is closed by \cref{l:onesided0} and
has positive length by \cref{l:onesided0positive}.

Now let $I\neq\R$ be an unbounded connected component of $V$.
Since the function $\bar f$ is of bounded variation, its limits at $\pm\infty$ exist and for any $x \in \R$,
\[
Mf(x)
\geq
\lim_{r \to \infty} \fint_{x-r}^{x+r} \bar f(y) \intd y
=
\lim_{y \to \infty} \frac{\bar f(y) + \bar f(-y)}2.
\]
Furthermore, if the right\-/hand side is zero, then $\lim_{|x| \to \infty} Mf(x) = 0$.
By \cref{l:abcd}\cref{i:abcd_unbounded}, it follows that \cref{e:varIbar} holds with equality if and only if $\lim_{|x| \to \infty} \bar f(x) = 0$.
By \cref{l:concave2}, this is the case precisely when $\R \setminus V$ has no unbounded components.
We conclude that if $\bar f$ is not constant, then \cref{e:var1bar} holds with equality if and only if $\R \setminus V$ is a compact interval of positive length.
This completes the proof.
\end{proof}

We can now characterise the cases of equality in \cref{e:var1}.
If $\bar f$ is constant, then $Mf$ is constant.
In this case, equality in \cref{e:var1} holds precisely when $f$ is also constant.
We may now assume that $\bar f$ is not constant.
In this case, by \cref{e:varfbarf}, \cref{e:var1bar} and \cref{l:intermediateequality}, equality holds in \cref{e:var1} if and only if \cref{e:varfbarf} holds with equality and $\R \setminus V=[a,b]$ for some real numbers $a<b$.
If $\R \setminus V$ is of this form, then the canonical representative \(\bar f\) is concave on \([a,b]\) by \cref{l:concave}.
Hence it is continuous on \((a,b)\) and
\[
0 \leq \bar f(a) \leq \lim_{x \searrow a} \bar f(x)
\quad \text{and} \quad
0 \leq \bar f(b) \leq \lim_{x \nearrow b} \bar f(x)
.
\]
Furthermore, $\bar f$ vanishes on $\R \setminus [a,b]$.
Since $f$ and $\bar f$ are equal almost everywhere, this implies that \cref{e:varfbarf} holds with equality if and only if \(f(x)=\bar f(x)\) for any \(x \in \R \setminus \{a,b\}\) and
\[
0 \leq f(a) \leq \lim_{x \searrow a} f(x)
\quad \text{and} \quad
0 \leq f(b) \leq \lim_{x \nearrow b} f(x)
.
\]
This completes the proof of \cref{t}.

\section{Discrete setting}\label{s:discrete}

In this section, we first use an embedding argument to prove the conditional result \cref{p:emb} and to derive the discrete \cref{tindZ} from the continuous \cref{tind}.
Afterwards, we adapt the arguments in \cref{s:l,s:proof_t} to show the more general discrete \cref{t:Z}.

\subsection{Embedding}\label{s:emb}

Let $f \colon \Z \to \R$ be a function of bounded variation and let $Mf \colon \Z \to \R$ be the discrete maximal function as defined in \cref{s:introdiscrete}.
We define an associated step function $f_c \colon \R \to \R$ by
setting $f_c(x)=f(n)$ for any integer $n$ and any $x \in [n-1/2,n+1/2)$.
Let $Mf_c \colon \R \to \R$ be the continuous maximal function as defined in \cref{s:intro}.

For any monotone map $\phi \colon \Z \to \Z$ there exists a monotone map $\psi \colon \Z \to \R$ such that $f \circ \phi = f_c \circ \psi$ and vice versa.
Hence $\var_\Z(f) = \var(f_c)$.
Our next claim is that $\var_\Z(M f) \leq \var(M f_c)$.
This is an immediate consequence of the following result.

\begin{lemma}\label{l:emb}%
We have that $Mf(n) = Mf_c(n)$ for any integer $n$.
\end{lemma}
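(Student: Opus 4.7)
The plan is to prove the two inequalities $Mf(n)\le Mf_c(n)$ and $Mf_c(n)\le Mf(n)$ separately, by relating the discrete averages of $f$ around $n$ to the continuous averages of $f_c$ around $n$ via a direct computation based on the defining property of the step function.

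For the easier direction $Mf(n)\le Mf_c(n)$, I would observe that for every discrete radius $r\in\Z_{\ge0}$, the interval $[n-r-1/2,n+r+1/2]$ is a disjoint union (up to endpoints) of the unit intervals $[m-1/2,m+1/2)$ for $m=n-r,\ldots,n+r$, so that
\[
\frac{1}{2r+1}\sum_{m=n-r}^{n+r}|f(m)|
=\frac{1}{2(r+1/2)}\int_{n-r-1/2}^{n+r+1/2}|f_c(y)|\intd y
\le Mf_c(n),
\]
and taking the supremum over $r$ yields the claim.

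For the reverse direction $Mf_c(n)\le Mf(n)$, the key observation is that a continuous average of $f_c$ around the integer $n$ is a ``weighted mediant'' of two consecutive discrete averages. Given $s>0$, if $s\le 1/2$ then $[n-s,n+s]\subseteq[n-1/2,n+1/2]$ and the average equals $|f(n)|\le Mf(n)$. Otherwise I would write $s=k-1/2+u$ with $k\ge1$ integer and $u\in[0,1)$. Then $[n-s,n+s]$ contains $[n-k+1/2,n+k-1/2]$, which covers the integers $n-k+1,\ldots,n+k-1$ fully, plus two leftover intervals of length $u$ on which $f_c$ takes the values $|f(n-k)|$ and $|f(n+k)|$. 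Writing $S_j=\sum_{m=n-j}^{n+j}|f(m)|$, a direct computation gives
\[
\frac{1}{2s}\int_{n-s}^{n+s}|f_c(y)|\intd y
=\frac{(1-u)S_{k-1}+u S_k}{(1-u)(2k-1)+u(2k+1)}.
\]
The right-hand side is a weighted mediant of $S_{k-1}/(2k-1)$ and $S_k/(2k+1)$ and therefore lies between them; since both are discrete averages and hence $\le Mf(n)$, taking the supremum over $s$ completes the proof.

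I do not expect any serious obstacle here beyond getting the bookkeeping right. The one point to be careful about is the boundary behaviour of $f_c$ at half-integers (its jump points), but since these form a measure-zero set they affect neither side of the identity, and the parametrisation $s=k-1/2+u$ is tailored so that the integration bounds fall precisely at the jumps when $u=0$.
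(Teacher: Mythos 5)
Your proposal is correct and takes essentially the same approach as the paper: both arguments rest on the same decomposition of the integral at half-integer radii, reducing the continuous supremum to the radii $r=m+1/2$, where the continuous average of $f_c$ coincides with the discrete average of $f$. The only difference is cosmetic --- the paper observes that between consecutive half-integer radii the average has the form $A+B/r$ and is therefore monotone, whereas you bound the intermediate averages via the mediant inequality; both conclusions are immediate consequences of the same algebraic identity.
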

\begin{proof}
For any nonnegative integer \(m\),
the step function $f_c$ is constant on the intervals $[n-m-1/2,n-m+1/2)$ and $[n+m-1/2,n+m+1/2)$.
Thus for any positive radius $r$ with $|r-m| \leq 1/2$ we have that
\[
\frac1{2r}\int_{n-r}^{n+r} f_c(y) \intd y
=
\frac1{2r} \int_{n-m}^{n+m} f_c(y) \intd y
+ \frac{r - m}{2r}(f_c(n-m) + f_c(n+m)).
\]
The right\-/hand side is of the form \(A+B/r\) for some constants \(A\) and \(B\) independent of $r$,
where $B=0$ if $m=0$.
It follows that the map $r \mapsto \fint_{n-r}^{n+r} f_c(y) \intd y$ is constant on $(0,1/2]$ and monotone on $[m-1/2,m+1/2]$ for any positive integer $m$.
Hence,
\[
Mf_c(n)
=
\sup_{r \in \Z_{\geq 0}}
\fint_{n-r-1/2}^{n+r+1/2} f_c(y) \intd y
=
\sup_{r \in \Z_{\geq 0}}
\avsum_{m=n-r}^{n+r} f(m)
=
Mf(n).
\]
This completes the proof.
\end{proof}

If $f_c$ satisfies \cref{e:var_bd} for some constant $C$, then it follows from the above that
\begin{equation}
\label{e:varMZf}
\var_\Z(M f) \leq \var(Mf_c) \leq C \var(f_c) = C \var_\Z(f)
\end{equation}
and hence $f$ satisfies \cref{e:var_bdZ} with the same constant.
This proves \cref{p:emb} and enables us to derive \cref{tindZ} from \cref{tind}.

\begin{proof}[Proof of \cref{tindZ}]
By assumption, $f$ is $\{0,1\}$\=/valued and of bounded variation and so the same is true for $f_c$.
Hence by \cref{e:varMZf,tind}, we see that $f$ satisfies \cref{e:var_bdZ} with $C=1$.
Equality can only hold if equality holds in \cref{tind}.
For a nonconstant $f$, this implies that the set $\{x \in \R \mid f_c(x) = 1\}$ is a bounded interval of positive length
and hence the set $\{n \in \Z \mid f(n) = 1\}$ is a bounded nonempty discrete interval.
On the other hand, if $f$ is of this form, then equality is attained because for any integer $n$ with $f(n)=1$ we have that
\[
\var_\Z(Mf) \geq 2Mf(n) - \lim_{m \to \infty} Mf(m)+Mf(-m) = 2 - 0 = \var_\Z(f).
\]
This completes the proof.
\end{proof}

\subsection{Discrete local variation bound}\label{s:abcdZ}

The following result is the discrete analogue of \cref{l:abcd}.
We will use it to derive \cref{t:Z} similarly as \cref{t} in the continuous setting, but without any of the technical difficulties related to compactness issues or exceptional sets of measure zero.

\begin{proposition}%
\label{l:abcdZ}
Let \(f \colon \Z \to [0,\infty)\) be a bounded function and
let \(I \subseteq \R\) be an interval
such that $f(n) = 0$ for any integer $n$ in the interior of $I$.
Then the following holds:
\begin{enumerate}[label=(\arabic*)]
\item\label{i:abcd_boundedZ}%
If $I = [a,b]$ for some integers $a<b$, then
$\var_{I \cap \Z}(Mf) \leq Mf(a)+Mf(b)$.
The inequality is strict unless $f$ vanishes everywhere on $\Z$.
\item\label{i:abcd_unboundedZ}%
If $I = (-\infty,a]$ or $I=[a,\infty)$ for some integer $a$, then $Mf$ is monotone on $I \cap \Z$ and $\var_{I \cap \Z}(Mf) = Mf(a) - \inf_{n \in I\cap\Z} Mf(n)$.
\end{enumerate}
\end{proposition}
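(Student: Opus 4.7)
The plan is to adapt the proof of \cref{l:abcd} to the discrete setting. For the unbounded case \cref{i:abcd_unboundedZ}, it suffices by symmetry to take $I = [a,\infty)$. I would then mimic the proof of \cref{l:mong}: for $n \geq a$, the averaging window $[n+1-r, n+1+r]$ appearing in $Mf(n+1)$ is the rightward shift by one of the window $[n-r, n+r]$ appearing in $Mf(n)$. Since $f$ vanishes on integers strictly greater than $a$, the new right endpoint satisfies $f(n+1+r) = 0$, so replacing the nonnegative old left endpoint $f(n-r)$ by this zero cannot increase the sum over the window. Taking suprema over $r$ yields $Mf(n+1) \leq Mf(n)$, whence $Mf$ is monotone on $I \cap \Z$ and the desired formula follows.

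For the bounded case \cref{i:abcd_boundedZ}, after translating so that $a = 0$, I would prove $\var_{\{0,\ldots,b\}}(Mf) \leq Mf(0) + Mf(b)$, strict unless $f$ vanishes identically on $\Z$. The strategy is to split $\{0,\ldots,b\}$ at an integer $c$ near $b/2$ and bound each half via discrete analogues of the lemmas of \cref{s:l}. Define
\[
\Mnot f(n) = \sup_{0 \leq r < n} \avsum_{m=n-r}^{n+r} f(m), \qquad \Msup f(n) = \sup_{r \geq n} \avsum_{m=n-r}^{n+r} f(m),
\]
so that $Mf(n) = \max(\Mnot f(n), \Msup f(n))$ for $n \geq 1$. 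As in \cref{l:mon}, $\Mnot f$ is nondecreasing on $\{1,\ldots,b\}$: shifting the left endpoint of any window by two is harmless because the two affected integers lie in the interior of $[0,b]$ and are zeros of $f$. As in \cref{l:gradbd}, enlarging a window around $n'$ to contain one around $n$ gives the gradient bound: if $r \geq n$ satisfies $\Msup f(n) = \sup_{s \geq r} A_s$, then
\[
\frac{\Msup f(n) - \Msup f(n')}{|n-n'|} \leq \frac{2\Msup f(n)}{2(r+|n-n'|)+1} \leq \frac{2\Msup f(n')}{2r+1}.
\]

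Consequently $(2n+1)\Msup f(n)$ is nondecreasing in $n$, giving $\Msup f(n) \leq \frac{2b+1}{2n+1}\Msup f(b)$. Substituting into the telescoping variation sum and using the partial-fraction identity $\frac{2}{(2n+1)(2n+3)} = \frac{1}{2n+1} - \frac{1}{2n+3}$ yields
\[
\var_{\{c,\ldots,b\}}(\Msup f) \leq \Bigl(\frac{2b+1}{2c+1}-1\Bigr)\Msup f(b),
\]
which is at most $\Msup f(b)$ with strict inequality provided $c \geq \lceil b/2 \rceil$ and $\Msup f(b) > 0$. Applying a discrete analogue of \cref{l:varmax} to $\Mnot f$ and the function $h$ defined by $h(n) = \Msup f(n)$ for $n < b$ and $h(b) = Mf(b)$ yields $\var_{\{c,\ldots,b\}}(Mf) \leq Mf(b)$. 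By reflection symmetry $n \mapsto b - n$, $\var_{\{0,\ldots,c'\}}(Mf) \leq Mf(0)$ for any $c' \leq \lfloor b/2 \rfloor$.

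If $b$ is even, the integer midpoint $c = c' = b/2$ is simultaneously valid for both sides, and summing the half-bounds immediately gives $\var_{\{0,\ldots,b\}}(Mf) \leq Mf(0) + Mf(b)$. The main obstacle is $b$ odd, when $\lceil b/2 \rceil = (b+1)/2 > (b-1)/2 = \lfloor b/2 \rfloor$ and no single integer split satisfies both half-bounds simultaneously. My plan here is to exploit the strict slack $\frac{3}{b+2}\Msup f(b)$ in the right-half bound with $c = (b+1)/2$ (and the symmetric slack on the left with $c' = (b-1)/2$), combining these with a direct gradient-bound estimate of the single middle jump $|Mf((b-1)/2) - Mf((b+1)/2)|$, to verify that the combined slack is sufficient to absorb that jump. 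The strictness of the proposition then follows from the strict slack in the $\Msup f$ variation bound whenever $\Msup f(b) > 0$, which is guaranteed when $f$ does not vanish identically on $\Z$.
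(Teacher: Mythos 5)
Your unbounded case and the core machinery of your bounded case are correct and essentially coincide with the paper's: your window-shifting monotonicity argument is the paper's \cref{l:mongZ}, your gradient bound with the extra discrete $1/2$ is \cref{l:gradbdZ}, the observation that $(2n+1)\Msup f(n)$ is nondecreasing plus the partial-fraction telescoping giving $\var(\Msup f)\leq\bigl(\frac{2b+1}{2c+1}-1\bigr)\Msup f(b)$ is exactly \cref{l:varbdZ}, and the combination via monotone $\Mnot f$ and the max-lemma mirrors \cref{l:monZ,l:varmaxZ}, with strictness coming from the same discrete saving. For even $b$ your proof is complete.

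The gap is the odd case, which is precisely where the paper does something you have not replaced. Your proposed tool for the middle jump, \enquote{a direct gradient-bound estimate of $|Mf((b-1)/2)-Mf((b+1)/2)|$}, does not exist in the form you have available: your gradient bound controls only $\Msup f$, whereas the jump of $Mf=\max(\Mnot f,\Msup f)$ across the missing midpoint can be carried entirely by $\Mnot f$, about which your lemmas say only that it is monotone. Concretely, let $f$ be the indicator of $\{b\}$ with $b\geq3$ odd, and set $c'=(b-1)/2$, $c=(b+1)/2$. Then $\Msup f(c')=\Msup f(c)=\frac{1}{b+2}$, so the $\Msup$ gradient bound sees a zero jump; but the window $[1,b]$ of radius $c-1$ first captures $f(b)$ at $n=c$, giving $\Mnot f(c')=0$, $\Mnot f(c)=\frac1b$, hence $Mf(c)-Mf(c')=\frac1b-\frac{1}{b+2}=\frac{2}{b(b+2)}$. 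This exceeds your right-half slack $\frac{3}{b+2}\Msup f(b)=\frac{3}{(b+2)(2b+1)}$ for every such $b$, so neither the stated gradient bound nor the right slack alone closes the argument. The plan can be repaired, but it needs genuinely new estimates: a gradient bound for $Mf$ itself (valid here because every window centred at $c$ of radius at most $c-2$ lies in the interior where $f$ vanishes, yielding $Mf(c)-Mf(c')\leq\frac{2Mf(c)}{b+2}$) together with window-comparison lower bounds on $\Msup f(b)$ and its reflected counterpart in terms of $Mf(c)$ so that the \emph{combined} slacks absorb the jump \textemdash{} in the example above the combined slack is $\frac{6}{(b+2)(2b+1)}$ against a jump of $\frac{2}{b(b+2)}$, which suffices only barely. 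The paper avoids all of this with one device: when $a+b$ is odd it translates to the shifted lattice $S=\Z+\frac12$, adjoins the true midpoint to form $S_0=S\cup\{0\}$, extends the maximal function to $0$, and bounds the variation over $S$ by that over $S_0$; the two halves then share the point $0$ and no middle jump ever arises. You should either carry out the absorption estimates above in full or adopt the paper's midpoint insertion.
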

The proof of this result goes along similar lines of the proof of \cref{l:abcd},
although some details differ.
In particular we have to work around the fact that not all integer intervals have integer midpoints.

We first prove the unbounded case in \cref{l:abcdZ}\cref{i:abcd_unboundedZ}.
By symmetry, it suffices to take $I=[a,\infty)$.
\begin{lemma}%
\label{l:mongZ}
Let $f \colon \Z \to [0,\infty)$ be a bounded function and let $a \in \Z$ be
such that $f(n) = 0$ for every integer $n>a$.
Then \(Mf\) is nonincreasing on \([a,\infty)\cap\Z\)
and hence \[\var_{[a,\infty) \cap \Z}(Mf) = Mf(a) - \inf_{n \in [a,\infty) \cap \Z} Mf(n).\]
\end{lemma}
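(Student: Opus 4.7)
The plan is to adapt the continuous argument from \cref{l:mong} to the discrete setting. For integers $a\leq x\leq y$, the idea is that every meaningful discrete average around $y$ can be matched by a discrete average around $x$ that is at least as large.

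First I would observe that in the supremum defining $Mf(y)$ we need only consider $r'\geq y-a$, because for smaller $r'$ the discrete interval $[y-r',y+r']\cap\Z$ lies in $\{n\in\Z\mid n>a\}$, so every $f$-value in the sum vanishes by hypothesis. Given such an $r'$, I would set $r=r'-(y-x)$, which is nonnegative since $r'\geq y-a\geq y-x$. Then $x-r=y-r'$ and $x+r=2x-y+r'\geq a$ (using $r'\geq y-a$ and $x\geq a$). Since $f(m)=0$ for every integer $m>a$, the extra terms in $\sum_{m=y-r'}^{y+r'}f(m)$ compared with $\sum_{m=x-r}^{x+r}f(m)$ all vanish, yielding
\[
\sum_{m=x-r}^{x+r} f(m) = \sum_{m=y-r'}^{y+r'} f(m).
\]

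Because $2r+1 = 2r'-2(y-x)+1 \leq 2r'+1$, dividing both sides shows that the discrete average at $x$ of radius $r$ is at least the discrete average at $y$ of radius $r'$. Taking the supremum over $r'\geq y-a$ therefore gives $Mf(x)\geq Mf(y)$, so $Mf$ is nonincreasing on $[a,\infty)\cap\Z$; the variation identity is then immediate because a nonincreasing function on a discrete interval has variation equal to the difference between its value at the left endpoint and its infimum. I do not expect any real obstacle: the only bookkeeping is to restrict the radii so that $r\geq 0$ and $x+r\geq a$, and to note that the denominators $2r+1$ and $2r'+1$ differ by precisely the right even number so that the discrete analogue of the continuous argument goes through cleanly.
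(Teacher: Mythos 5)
Your proposal is correct and is essentially the paper's own argument read in the opposite direction: the paper fixes an admissible radius $r\geq n-a$ at the left point and enlarges the interval rightward by $2(m-n)$ (the extra terms vanish since they lie beyond $a$, and the denominator grows), whereas you fix an admissible radius $r'\geq y-a$ at the right point and shrink it to $r=r'-(y-x)$ at the left point; the radius correspondence, the vanishing of $f$ beyond $a$, and the denominator comparison are the same. The bookkeeping you flag ($r\geq0$, $x+r\geq a$, restriction of the supremum using $f\geq0$) is handled correctly, so there is no gap.
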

\begin{proof}
This is similar to the proof of \cref{l:mong}.
Let \(n,m\in\Z\) be such that \(a\leq n\leq m\).
Then,
\[
Mf(n)
=
\sup_{r \geq n-a}
\avsum_{k=n-r}^{n+r} f(k)
\geq
\sup_{r \geq n-a}
\avsum_{k=n-r}^{n+r+2(m-n)} f(k)
=
Mf(m)
.
\]
This completes the proof.
\end{proof}

The rest of this section is devoted to the proof of \cref{l:abcdZ}\cref{i:abcd_boundedZ},
i.e.\ the case that \(I=[a,b]\) for some integers $a<b$.
We start with a reduction using translation invariance.
We also insert a midpoint in the case that $a+b$ is odd.
For this, let \(f\) be as in \cref{l:abcdZ}.
Set
\[
S=
\begin{cases}
\Z&\text{if }a+b\text{ is even},\\
\Z+\tfrac12=\bigl\{\ldots,-\tfrac32,-\tfrac12,\tfrac12,\tfrac32,\ldots\bigr\}&\text{if }a+b\text{ is odd}
\end{cases}
\]
and write $S_0 = S \cup \{0\}$.
We define a translated function \(\trf\colon S\to[0,\infty)\) by \[\trf(n)=f\Bigl(n+\frac{a+b}2\Bigr)\] and we define its centred maximal function $M\trf \colon S_0 \to [0,\infty)$ by
\[
M\trf(n)
=
\sup_{v \in S;\,v \leq n}
\avsum_{m=v}^{2n-v} \trf(m)
.
\]
Given a domain \(T\in\{S,S_0\}\), a function \(g:T\to[0,\infty)\) and an interval \(I\subseteq\R\)
we define the variation of \(g\) on the \emph{discrete interval} $I\cap T$ by
\[
\var_{I\cap T}(g)
=
\sup_{\text{$\phi \colon \Z \to I\cap T$ monotone}}
\sum_{i \in \Z} |g(\phi(i)) - g(\phi(i+1))|.
\]
If $S=\Z$, then these definitions agree with those in \cref{s:introdiscrete}.
Note that
\begin{equation*}
\var_{[a,b]\cap\Z}(Mf)
=
\var_{[-(b-a)/2,(b-a)/2]\cap S}(M\trf)
\leq
\var_{[-(b-a)/2,(b-a)/2]\cap S_0}(M\trf)
\end{equation*}
and
\begin{equation*}
M\trf\Bigl(-\frac{b-a}2\Bigr)
=
Mf(a),
\qquad
M\trf\Bigl(\frac{b-a}2\Bigr)
=
Mf(b)
.
\end{equation*}

From now on and for the rest of the proof of \cref{l:abcdZ}\cref{i:abcd_boundedZ},
let $f \colon S \to [0,\infty)$ be a bounded nonzero function.
By the above relations and by symmetry, it is enough to show the strict inequality
\begin{equation}
\label{e:abcd01Z}
\var_{[0,a] \cap S_0}(Mf)
<
Mf(a)
\end{equation}
for any positive \(a\in S\) such that \(f(n)=0\) for all \(n\in S\) with \(-a<n<a\).
This is analogous to \cref{e:abcd01}.

Similarly as in the continuous setting, $Mf$ restricted to $[0,a] \cap S_0$ is the pointwise maximum of the auxiliary maximal functions $\Mnot f, \Msup f \colon [0,a] \cap S_0 \to [0,\infty)$ defined by
\[
\Mnot f(n)
=
\max_{v \in S;\,-a<v\leq n}
\avsum_{m=v}^{2n-v} f(m)
,
\qquad
\Msup f(n)
=
\sup_{v \in S;\,v \leq -a}
\avsum_{m=v}^{2n-v} f(m)
.
\]

The following gradient bound for $\Msup f$ is analogous to the continuous \cref{l:gradbd}.
Since admissible radii in the above discrete setting are separated by a distance of~$1$, an additional term $1/2$ appears in this bound.
Because of this, we also dispense with the additional lower bound on the radii in \cref{l:gradbd}.
Except for these differences, the proof is similar to the continuous case.

\begin{lemma}%
\label{l:gradbdZ}
Let $n,m \in [0,\infty) \cap S_0$ be distinct.
Then,
\[
\frac{\Msup f(n) - \Msup f(m)}{|n-m|}
\leq
\frac{\Msup f(n)}{n+a+1/2+|n-m|}
\leq
\frac{\Msup f(m)}{n+a+1/2}
.
\]
\end{lemma}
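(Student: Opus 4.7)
The plan is to run the argument of Lemma~\ref{l:gradbd} in the discrete setting, with counting of summands replacing interval length and an extra $\tfrac12$ arising because a ``radius'' $r$ around $n$ corresponds to $2r+1$ summands. Since $f$ is bounded, $\Msup f(n)$ is finite, so for any $\epsilon>0$ I would choose $v \in S$ with $v \leq -a$ such that
\[
(1-\epsilon)\Msup f(n) \leq \avsum_{k=v}^{2n-v} f(k);
\]
writing $r = n - v$, this sum has $2r+1$ terms and satisfies $r \geq n+a$.

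Next I would produce an admissible $v' \in S$ (with $v' \leq -a$) so that the interval $[v', 2m-v']$ is centred at $m$, contains $[v, 2n-v]$, and is minimal under these constraints. The required choice is $v' = v - 2(n-m)$ when $n>m$ and $v'=v$ when $m>n$; in both cases the radius around $m$ equals $r + |n-m|$, the new interval has $2r + 1 + 2|n-m|$ terms, and admissibility together with the membership $v' \in S$ are straightforward to verify (the latter holds because $2(n-m)$ is always an integer for $n,m\in S_0$). Using the nonnegativity of $f$ to restrict the sum,
\[
\Msup f(m)
\geq
\avsum_{k=v'}^{2m-v'} f(k)
\geq
\frac{2r+1}{2r+1+2|n-m|}\avsum_{k=v}^{2n-v} f(k),
\]
and subtracting from $(1-\epsilon)\Msup f(n)$ and letting $\epsilon \to 0$ yields
\[
\Msup f(n) - \Msup f(m) \leq \frac{|n-m|}{r+\tfrac12+|n-m|}\, \Msup f(n).
\]
Dividing by $|n-m|$ and using $r \geq n+a$ gives the first inequality of the lemma; the second inequality then follows by the same algebraic rearrangement as in the continuous proof, the first inequality being equivalent to $\Msup f(n)(n+a+\tfrac12) \leq \Msup f(m)(n+a+\tfrac12+|n-m|)$.

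The main subtlety, rather than a true obstacle, is the bookkeeping around $S_0 = S \cup \{0\}$ and the two cases $S=\Z$ and $S=\Z+\tfrac12$ (the latter arising when $a+b$ is odd in Proposition~\ref{l:abcdZ}). One checks in each configuration of $n,m\in S_0$ that $2(n-m)$ is an integer, so that the shift $v' = v - 2(n-m)$ remains in $S$; this is automatic because elements of $S$ differ by integers and $S_0$ adds only the single point $0$. The $\tfrac12$ in the denominators of the lemma emerges transparently from the identity $2r+1 = 2(r+\tfrac12)$, which converts the count of summands into an effective radius.
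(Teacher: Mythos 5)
Your proof is correct and follows essentially the same route as the paper's: your shifted endpoint $v'$ (namely $v-2(n-m)$ for $n>m$ and $v$ for $m>n$) is exactly the paper's $w$ defined by $m-w=n-v+|n-m|$, with the same integrality observation guaranteeing membership in $S$, and the comparison of averages over $2r+1$ versus $2r+1+2|n-m|$ terms, the bound $r\geq n+a$, and the concluding rearrangement all match the paper's estimates. No gaps.
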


\begin{proof}
We have $\Msup f(n) < \infty$ since $f$ is bounded.
Hence for any $\epsilon > 0$ there exists a $v \in S$ with $v \leq -a$ such that $(1-\epsilon) \Msup f(n) \leq \avsum_{k=v}^{2n-v} f(k)$.
Let \(w\) be such that \[m-w = n-v+|n-m|.\]
Then $w \in S$ because \(v-w\) is an integer.
Since \(w\leq v < 2n-v\leq2m-w\),
\begin{align*}
\nonumber
(1-\epsilon)\Msup f(n) - \Msup f(m)
&\leq
\avsum_{k=v}^{2n-v} f(k)
-
\avsum_{k=w}^{2m-w} f(k)
\\
\nonumber
&\leq
\Bigl(
\frac1{2(n-v)+1}
-
\frac1{2(m-w)+1}
\Bigr)
\sum_{k=v}^{2n-v} f(k)
\\
\nonumber
&=
\frac{2|n-m|}{2(m-w)+1}
\avsum_{k=v}^{2n-v} f(k)
\\
&\leq
\frac{|n-m|}{n+a+1/2+|n-m|}
\Msup f(n).
\end{align*}
The first, third and fourth relations follow from definitions and the fact that $w \leq -a$.
In the second line, we use that $f$ is nonnegative to reduce the range of summation of the second sum.
Now the first inequality in the lemma follows by letting \(\epsilon\to0\).
The second inequality follows after rearranging terms.
\end{proof}

Our next result is a local variation bound for $\Msup f$ analogous to the continuous \cref{l:varbd}.
Here the proof is somewhat simplified due to a telescoping argument.
Furthermore, due to the additional term $1/2$ in \cref{l:gradbdZ} above, we are able to show a slightly stronger inequality than in the continuous setting.
This artefact already allows us to obtain
a strict inequality,
whereas in the continuous setting we have to work a little harder to get the strict inequality in \cref{l:varbd}.

\begin{lemma}%
\label{l:varbdZ}
Let \(a\in S\) be nonnegative.
Then,
\[
\var_{[0,a] \cap S_0}(\Msup f) \leq \frac{2a}{2a+1} \Msup f(a)
.
\]
\end{lemma}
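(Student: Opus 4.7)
The plan is to mimic the proof of \cref{l:varbd} but replace the Riemann-sum step by a direct telescoping argument in the discrete variable. Label the consecutive points of $[0,a]\cap S_0$ as $0 = n_0 < n_1 < \cdots < n_\ell = a$. Since $\Msup f$ is defined on this finite set, the triangle inequality shows that any monotone $\phi\colon\Z\to[0,a]\cap S_0$ contributes at most
\[
\var_{[0,a]\cap S_0}(\Msup f) = \sum_{i=0}^{\ell-1} |\Msup f(n_i) - \Msup f(n_{i+1})|,
\]
so it is enough to control this finite sum.

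The first key step is to produce a unified bound
\[
|\Msup f(n_i) - \Msup f(n_{i+1})| \leq \frac{(n_{i+1}-n_i)\Msup f(n_i)}{n_{i+1}+a+\tfrac12}
\]
by pairing the two parts of \cref{l:gradbdZ} in opposite configurations: the first inequality applied with $(n,m) = (n_i, n_{i+1})$ handles the case $\Msup f(n_i) \geq \Msup f(n_{i+1})$, and the second inequality applied with $(n,m) = (n_{i+1}, n_i)$ handles the opposite case, both producing the same right-hand side. The second key step is the majorisation $\Msup f(n_i) \leq \tfrac{(2a+1/2)\Msup f(a)}{n_i+a+1/2}$, obtained by applying the first inequality of \cref{l:gradbdZ} with $(n,m) = (n_i, a)$ and rearranging; equivalently, the quantity $n \mapsto (n+a+\tfrac12)\Msup f(n)$ is nondecreasing on $[0,\infty)\cap S_0$.

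Substituting the majorisation into the pointwise bound and using the partial-fraction identity
\[
\frac{n_{i+1}-n_i}{(n_i+a+\tfrac12)(n_{i+1}+a+\tfrac12)} = \frac{1}{n_i+a+\tfrac12} - \frac{1}{n_{i+1}+a+\tfrac12}
\]
makes the sum telescope to
\[
(2a+\tfrac12)\Msup f(a)\left(\frac{1}{a+\tfrac12} - \frac{1}{2a+\tfrac12}\right) = \frac{2a}{2a+1}\Msup f(a),
\]
which is the desired bound. I do not expect a genuine obstacle: the only delicate choice is the alignment of the two parts of \cref{l:gradbdZ} so that both directions of the difference are controlled by the same expression (common denominator and common factor of $\Msup f(n_i)$), which is exactly what powers the telescoping. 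The argument applies uniformly whether $S=\Z$ or $S=\Z+\tfrac12$, since it depends only on the spacings $n_{i+1}-n_i$.
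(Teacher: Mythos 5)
Your proof is correct and follows essentially the same route as the paper: a pointwise bound on $|\Msup f(n_i)-\Msup f(n_{i+1})|$ obtained by pairing the two inequalities of \cref{l:gradbdZ}, a majorisation through $\Msup f(a)$ (equivalent to the paper's use of $\max(\Msup f(n),\Msup f(m))$ followed by the same rearrangement), and the identical partial-fraction telescoping to $\frac{2a}{2a+1}\Msup f(a)$. Your explicit observation that $n\mapsto(n+a+\tfrac12)\Msup f(n)$ is nondecreasing is a nice reformulation, but the argument is the paper's.
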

\begin{proof}
Let $n < m$ be elements of $[0,a] \cap S_0$.
By the two inequalities in \cref{l:gradbdZ},
\begin{align*}
|\Msup f(n) - \Msup f(m)|
&\leq
\frac
{m-n}
{m+a+1/2}
\max(\Msup f(n),\Msup f(m))
\\
&\leq
\frac
{(m-n)(2a+1/2)}
{(n+a+1/2)(m+a+1/2)}
\Msup f(a)
\\
&=
\Bigl(
\frac
{2a+1/2}
{n+a+1/2}
-
\frac
{2a+1/2}
{m+a+1/2}
\Bigr)
\Msup f(a)
.
\end{align*}
Now let
$
0 = n_0 < n_1 < \ldots < n_k = a
$
be an enumeration of $[0,a] \cap S_0$.
We use the above estimate
and evaluate the resulting telescoping sum
to obtain that
\begin{align*}
\var_{[0,a] \cap S_0}(\Msup f)
&=
\sum_{i=0}^{k-1}
|\Msup f(n_i) - \Msup f(n_{i+1})|
\leq
\Bigl(
\frac
{2a+1/2}
{a+1/2}
-
1
\Bigr)
\Msup f(a)
.
\end{align*}
This completes the proof.
\end{proof}

Regarding the other auxiliary maximal function $\Mnot f$, the following result similar to \cref{l:mon,l:mongZ} holds.

\begin{lemma}\label{l:monZ}
Let $a \in S$ be nonnegative and let $f(n)=0$ for any $n \in S$ with $-a<n<a$.
Then $\Mnot f$ is nondecreasing on $[0,a] \cap S_0$.
\end{lemma}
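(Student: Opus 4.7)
The plan is to mirror the proof of the continuous counterpart \cref{l:mon}. There, for $0 < x \leq y \leq 1$, each admissible radius $r$ defining $\Mnot f(x)$ was matched with an interval centred at $y$ whose average is at least as large: the left endpoint of $[x-r, x+r]$ was shifted rightward by $2(y-x)$, producing $[x-r+2(y-x), x+r]$. The crux was that the removed slice sits in $[-1,1]$, where $f$ vanishes, so the integral is unchanged while the normalising length decreases. I would transplant this shift argument to the integer lattice.

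First I would fix $n \leq m$ in $[0,a] \cap S_0$ and an admissible $v \in S$, meaning $-a < v \leq n$, entering the definition of $\Mnot f(n)$. By analogy with the continuous shift, I would introduce $w = v + 2(m-n)$. Since $n, m \in S_0$ and $S_0$ is either $\Z$ or $(\Z + \tfrac12) \cup \{0\}$, the quantity $2(m-n)$ is always an integer, so $w \in S$. The shifted window $[w, 2m-w]$ is centred at $m$ and has the same right endpoint $2n-v$ as the original window $[v, 2n-v]$.

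Then I would split into two cases according to whether $w$ is itself admissible for $\Mnot f(m)$. In the principal case $w \leq m$, equivalently $v \leq 2n-m$, the removed indices $k \in \{v, v+1, \ldots, w-1\} \cap S$ all lie in $(-a,a)$, because $v > -a$ and $w - 1 \leq m - 1 < a$; by hypothesis $f$ vanishes on them, so the two sums agree, the number of averaged terms drops by $2(m-n)$, and the $v$-average is at most the $w$-average, hence at most $\Mnot f(m)$. In the remaining case $v > 2n - m$, the original window $[v, 2n-v]$ is already contained in $(-a,a)$, so its average is zero. Taking the supremum over $v$ yields $\Mnot f(n) \leq \Mnot f(m)$.

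The main nuisance I expect is the discrete bookkeeping: verifying that $w \in S$ for both choices of $S$, checking that the cut-out range lands entirely in the vanishing region $(-a,a)$, and handling the small-window case $v > 2n - m$ separately. This small-window case has no prominent continuous counterpart because the constraint $r > 1-x$ in \cref{l:mon} is absorbed into the supremum, whereas no analogous truncation is built into $\Mnot f$ through $v$. Apart from this, the argument is conceptually identical to that of \cref{l:mon}.
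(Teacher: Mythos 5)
Your proof is correct and takes essentially the same approach as the paper's: both shift the left endpoint of each admissible window by $2(m-n)$, use that the discarded indices lie in $(-a,a)$ where $f$ vanishes, and bound the resulting average over the window centred at $m$ by $\Mnot f(m)$. The only difference is bookkeeping: the paper absorbs your Case 2 upfront by restricting the maximum to $-a < v \leq 2n-a$ (the excluded windows lie in $(-a,a)$ and average to zero) and treats $n=0$ separately via nonnegativity, whereas your explicit case split on $w \leq m$ handles all of $[0,a]\cap S_0$, including $n=0$, uniformly.
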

\begin{proof}
Let $n,m \in S$ be such that $0 < n \leq m \leq a$.
Then,
\[
\Mnot f(n)
=
\max_{v\in S;\, -a < v \leq 2n-a}
\avsum_{k=v}^{2n-v} f(k)
\leq
\max_{v \in S;\, -a < v \leq 2n-a}
\avsum_{k=v+2(m-n)}^{2n-v} f(k)
\leq
\Mnot f(m)
.
\]
Since $\Mnot f(0)=0$ and $\Mnot f$ is nonnegative, this completes the proof.
\end{proof}
Having established the monotonicity of $\Mnot f$ and a variation bound for $\Msup f$ similarly as in the continuous setting, the next step is to combine these results using the following analogue of \cref{l:varmax}.
We omit the proof because it is the same.

\begin{lemma}\label{l:varmaxZ}
Let $a \in S$ be nonnegative.
Let $g,h \colon [0,a] \cap S_0 \to \R$ be functions such that $g(a) \leq h(a)$ and let $g$ be nondecreasing.
Then, \[\var_{[0,a] \cap S_0}(\max(g,h)) \leq \var_{[0,a] \cap S_0}(h).\]
\end{lemma}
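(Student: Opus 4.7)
The plan is to reproduce the proof of \cref{l:varmax} verbatim with cosmetic changes to adapt it to the discrete domain $[0,a] \cap S_0$, which is exactly why the authors omit it. Concretely, I would enumerate $[0,a]\cap S_0$ as an arbitrary increasing finite sequence $0 = x_0 < x_1 < \ldots < x_k = a$ (or an arbitrary increasing subsequence thereof, since the variation is achieved by any such enumeration in the discrete case) and set $u = \max(g,h)$. The goal is to bound the telescoping sum $\sum_{i=0}^{k-1} |u(x_i) - u(x_{i+1})|$ by $\var_{[0,a] \cap S_0}(h)$.

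Following the continuous argument, I would introduce the convention $x_{k+1} = a$ and define the index set
\[
P = \{-1\} \cup \{i \in \{0,\ldots,k\} \mid h(x_i) \geq u(x_{i+1})\},
\]
enumerated as $p(-1) < p(0) < \ldots < p(\ell)$. The endpoint bookkeeping is identical: $p(-1)=-1$ by construction and $p(\ell)=k$ because $x_k = x_{k+1} = a$ together with the hypothesis $g(a) \leq h(a)$ forces $h(x_k) = u(x_{k+1})$. For indices $i \notin P$ one still has $h(x_i) < u(x_{i+1})$, and since $g$ is nondecreasing on $[0,a]\cap S_0$ (an assumption that transfers without change), we conclude $u(x_i) \leq u(x_{i+1})$. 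For $i \in P \setminus \{-1\}$ the chain $h(x_i) \geq u(x_{i+1}) \geq g(x_{i+1}) \geq g(x_i)$ yields $h(x_i) = u(x_i)$ and $u(x_i) \geq u(x_{i+1})$.

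These two sign patterns give, on each block between successive elements of $P$, a nondecreasing run followed by the drop at $p(j)$, so that
\[
\sum_{i=0}^{k-1} |u(x_i)-u(x_{i+1})|
= \sum_{j=0}^\ell \bigl(2h(x_{p(j)}) - u(x_{p(j-1)+1}) - u(x_{p(j)+1})\bigr),
\]
and bounding each $u(x_{p(j-1)+1})$ and $u(x_{p(j)+1})$ below by the corresponding values of $h$ produces a sum that is dominated by $\var_{[0,a]\cap S_0}(h)$. Taking the supremum over all monotone enumerations of $[0,a]\cap S_0$ (or equivalently the full enumeration, since any strictly monotone reparametrisation of the finite set only decreases the sum) gives the claim.

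There is no real obstacle here: the only point one has to verify is that the convention $x_{k+1} = a$ and the endpoint argument $p(\ell) = k$ work when $a \in S_0$, which they do because $g(a) \leq h(a)$ is exactly the hypothesis of the lemma. No feature of the continuous line was used in \cref{l:varmax} beyond the ordering of the domain and the monotonicity of $g$, so the discrete proof is literally a transcription. The slight subtlety that $S_0$ may contain the extra point $0$ outside $S$ is irrelevant, since the argument treats the domain only as a finite linearly ordered set.
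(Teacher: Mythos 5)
Your proof is correct and is exactly the paper's intended argument: the paper omits the proof of this lemma precisely because it is the proof of \cref{l:varmax} transcribed verbatim, and your transcription (including the endpoint convention $x_{k+1}=a$, the set $P$, and the observation that only the linear order of the domain and the monotonicity of $g$ are used) is accurate. The reduction to the full finite enumeration of $[0,a]\cap S_0$ and the remark about the possible extra point $0\in S_0\setminus S$ are both handled correctly.
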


We are now ready to prove \cref{e:abcd01Z}.
Let $a \in S$ be positive such that $f(n) = 0$ for any $n \in S$ with $-a < n < a$ and
let $h \colon [0,a] \cap S_0 \to [0,\infty)$ be the function defined by $h(n) = \Msup f(n)$ for $n < a$ and $h(a) = Mf(a)$.
Then $\Mnot f(a) \leq h(a)$ and $Mf$ restricted to $[0,a] \cap S_0$ is the pointwise maximum of $\Mnot f$ and $h$.
Hence we can apply \cref{l:monZ,l:varmaxZ} and then \cref{l:varbdZ} to obtain that
\begin{align*}
\var_{[0,a] \cap S_0} (Mf)
&\leq
\var_{[0,a] \cap S_0} (h)
\leq
\var_{[0,a] \cap S_0} (\Msup f)
+
Mf(a)
-
\Msup f(a)
<
Mf(a)
.
\end{align*}
This proves \cref{e:abcd01Z} and thus completes the proof of \cref{l:abcdZ}.

\subsection{Discrete global variation bound}\label{s:proofZ}

We now prove the inequality in \cref{t:Z}.
Throughout this section and the next section,
let $f \colon \Z \to [0,\infty)$ be a function of bounded variation
such that for any $n \in \Z$ we have $f(n) = 0$ or $f(n) = Mf(n)$.
For possibly infinite endpoints $a \leq b$ we write
\[[a,b]\cap\Z=\{n\in\Z\mid a\leq n\leq b\}.\]

There exists a possibly unbounded discrete interval $\mathcal I \subseteq \Z$ with at least two elements and a nondecreasing sequence $(a_i)_{i \in \I}$ of points in $\Z \cup \{\pm \infty\}$ such that
\begin{equation*}
\{n\in\Z\mid f(n)>0\}
=
\bigcup_{i,i+1\in\I ;\, i \text{ odd}}
[a_i, a_{i+1}] \cap \Z
=
\Z \setminus
\bigcup_{i,i+1\in\I ;\, i \text{ even}}
(a_i, a_{i+1})
\end{equation*}
and \(a_i+2\leq a_{i+1}\) for any even \(i\in\I\) such that \(i+1\in\I\).
We may further assume that the points $\pm \infty$ each occur at most once in the sequence $(a_i)_{i \in \I}$.

Let $i \in \I$ be such that $i+1 \in \I$.
If $i$ is even, then by \cref{l:abcdZ},
\begin{equation}
\label{e:varIbarZ}
\var_{[a_i,a_{i+1}]\cap\Z}(Mf)
\leq
\var_{[a_i,a_{i+1}]\cap\Z}(f).
\end{equation}
On the other hand, if $i$ is odd, then by assumption it holds that \(f(n)=Mf(n)\) for all \(n\in[a_i,a_{i+1}]\cap\Z\) and thus \cref{e:varIbarZ} holds with equality.
We can conclude that
\begin{equation}
\label{e:var1barZ}
\var_\Z(Mf)
=
\sum_{i,i+1\in\I}\var_{[a_i,a_{i+1}]\cap\Z}(Mf)
\leq
\sum_{i,i+1\in\I}\var_{[a_i,a_{i+1}]\cap\Z}(f)
=
\var_\Z(f)
.
\end{equation}
This proves the inequality in \cref{t:Z}.

\subsection{Cases of equality}\label{s:eqZ}

For the characterisation of the cases of equality in \cref{t:Z},
we may assume that $f$ is not constant since otherwise both sides of \cref{e:var1barZ} are zero.
By the last subsection, equality holds in \cref{e:var1barZ} if and only if for every even \(i\in\I\) with $i+1\in\I$ we have equality in \cref{e:varIbarZ}.
We need the following concavity result whose proof we omit because it is similar to the proof of \cref{l:concave2}.
The conclusion of this result slightly differs from \cref{l:concave2} because here we already assume $f$ to be nonconstant.
\begin{lemma}
\label{l:concave2Z}
Let \(i\in\I\) be odd and such that \(i+1\in\I\) and \(a_{i+1}=\infty\).
Then $\lim_{n \to \infty} f(n) > 0$ and $a_i > -\infty$.
\end{lemma}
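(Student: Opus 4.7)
My plan is to follow the strategy of the continuous Lemma~\ref{l:concave2}, which itself rests on the concavity result Lemma~\ref{l:concave}. The first step is to establish the discrete concavity analogue: for any three integers $n_0 < n_1 < n_2$ lying in a single support component of $f$ and the affine function $L$ with $L(n_0) = f(n_0)$ and $L(n_2) = f(n_2)$, we have $f(n_1) \geq L(n_1)$. The proof mirrors that of Lemma~\ref{l:concave}. If the inequality failed, the smallest integer $n_1' \in [n_0, n_2] \cap \Z$ minimising $g := f - L$ would satisfy $n_0 < n_1' < n_2$, so taking $r = \min(n_1' - n_0, n_2 - n_1') \geq 1$,
\[
Mf(n_1')
\geq
\avsum_{m = n_1' - r}^{n_1' + r} f(m)
=
\avsum_{m = n_1' - r}^{n_1' + r} g(m)
+ L(n_1')
>
g(n_1') + L(n_1')
=
f(n_1'),
\]
where the strict inequality uses the affine mean value property together with $g(n_1' - r) > g(n_1')$, which in turn follows from the smallest-minimiser choice and the fact that $n_1' - r < n_1'$ lies in $[n_0, n_2]\cap\Z$. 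This contradicts $f(n_1') = Mf(n_1')$, which holds because $n_1'$ lies in the support.

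The second step deduces both conclusions of the lemma in parallel, following Lemma~\ref{l:concave2} essentially verbatim. Assume for contradiction that $\liminf_{n \to \infty} f(n) = 0$. Since $f(a_i) > 0$, we may pick an integer $n_1 > a_i$ with $f(n_1) < f(a_i)$; the affine interpolant $L$ through $(a_i, f(a_i))$ and $(n_1, f(n_1))$ is strictly decreasing, and discrete concavity on $[a_i, \infty) \cap \Z$ gives $f(n) \leq L(n)$ for every integer $n \geq n_1$, forcing $f(n) < 0$ for large $n$ and contradicting $f \geq 0$. Hence $\lim_{n \to \infty} f(n) > 0$. For the other conclusion, assume $a_i = -\infty$; then the support component is all of $\Z$, concavity holds globally, and the nonconstancy of $f$ produces integers $m_0 < m_1$ with $f(m_0) \neq f(m_1)$. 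Running the decreasing-line argument to the right if $f(m_0) > f(m_1)$, or to the left if $f(m_0) < f(m_1)$, yields the same contradiction, so $a_i > -\infty$.

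The main obstacle I anticipate is the discrete concavity step. One must verify that the symmetric averaging window $\{n_1' - r, \ldots, n_1' + r\}$ sits inside the support component so that the integer average of $f$ is controlled by $g$ and the affine mean value of $L$ there, and that selecting $n_1'$ to be the \emph{smallest} integer minimiser (rather than an arbitrary one) is what produces the strict inequality at the left endpoint $n_1' - r$. Once this is handled, no further discrete subtleties arise: the symmetric averaging windows are automatically integer-centred on the support, and the limit arguments in the second step transfer from the continuous case without modification.
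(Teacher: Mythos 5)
Your proposal is correct and takes exactly the route the paper intends (the paper omits this proof as ``similar to \cref{l:concave2}''): you supply the discrete analogue of the concavity result \cref{l:concave}, with the smallest-minimiser choice and the symmetric window $r=\min(n_1'-n_0,\,n_2-n_1')$ correctly producing the strict inequality at $n_1'-r$, and then run the decreasing-line argument of \cref{l:concave2}, using the standing nonconstancy assumption for the $a_i=-\infty$ case. One cosmetic point: in the first conclusion you write ``since $f(a_i)>0$'', which is ill-formed when $a_i=-\infty$; as in the continuous proof, start instead from an arbitrary point $n_0$ of the support component with $f(n_0)>0$.
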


Since $f$ is not constant, it is not the zero function.
Hence $\I$ is not of the form $\{i,i+1\}$ for any even $i$.
By \cref{l:concave2Z}, it is also not of this form for any odd $i$.
Hence $\I$ has at least three elements.
If there exists an even \(i\in\I\) with \(i+1\in\I\) and \(a_i,a_{i+1}\in\Z\),
then \cref{e:varIbarZ} is a strict inequality by \cref{l:abcdZ}
and hence \cref{e:var1barZ} is strict.
It remains to consider the case that no such \(i\) exists.
After re\-/indexing and up to symmetry, this means that \(\I\) is either \(\{0,1,2,3\}\) or $\{0,1,2\}$.

In the first case, $f$ is finitely supported
and hence, by \cref{l:abcdZ}\cref{i:abcd_unboundedZ},
equality holds in \cref{e:varIbarZ} for the even indices $i=0$ and $i=2$.
Thus \cref{e:var1barZ} holds with equality.
In the second case, by \cref{l:concave2Z},
\[
Mf(n)
\geq
\lim_{r \to \infty} \avsum_{m=n-r}^{n+r} f(m)
=
\lim_{m \to \infty} \frac{f(m)}2
> 0.
\]
for any integer $n$.
By \cref{l:abcdZ}\cref{i:abcd_unboundedZ}, this means that \cref{e:varIbarZ} is strict for $i=0$ and hence \cref{e:var1barZ} is strict.
We conclude that equality holds in \cref{e:var1barZ} if and only if $f$ is constant or $\{n \in \Z \mid f(n)>0\} = [a,b] \cap \Z$ for some integers $a \leq b$.
This completes the proof of \cref{t:Z}.

\printbibliography

\end{document}